\documentclass[11pt,letterpaper]{amsart}
\usepackage[utf8]{inputenc}
\usepackage{amsmath, amsthm, amssymb} 
\usepackage{savesym}
\savesymbol{P}
\usepackage{graphicx, listings}
\usepackage{tikz, wrapfig}
\usetikzlibrary{decorations.markings, arrows.meta}
\usepackage{ytableau, cancel}
\usepackage[dvipsnames]{xcolor}
\graphicspath{ {./images/} }
\usepackage{hyperref}
\hypersetup{
    colorlinks=true,
    linkcolor=black,
    filecolor=black,      
    urlcolor=black,
    citecolor=black
} 

\theoremstyle{definition}
\newtheorem{proposition}{Proposition}
\newtheorem{theorem}{Theorem}

\newtheorem{lemma}{Lemma}
\newtheorem{corollary}{Corollary}

\newtheorem{example}{Example}

\title{Generic Rigidity and the Grassmannian}
\author{Alexander Heaton}
\date{August 21, 2026}

\begin{document}

\begin{abstract}
We characterize the generic rigidity of bar-joint frameworks in all dimensions. The conditions are constructed explicitly from the graph alone, independently of any placement, and are checked by exchange rules on Young tableaux. By following paths in a directed acyclic subgraph of the line graph, we build multisets of red and blue tableaux, called balancing conditions. Then generic rigidity is determined by counting if every semistandard tableau appears the same number of times red as it does blue.
\end{abstract}

\maketitle

\section{Introduction}\label{sec:introduction}

Given a graph $G = (V,E)$ and a placement $p:V \to \mathbb{R}^d$ of its vertices in Euclidean space, we treat the edges as rigid bars, allowed to rotate about joints at each vertex. Such a framework is said to be locally rigid if its only continuous deformations are rigid body motions, and locally flexible otherwise. 
Results of Asimow and Roth \cite{asimow-roth-I, asimow-roth-II} show that for generic frameworks, local rigidity is equivalent to infinitesimal rigidity, and that generic local rigidity is a property of the graph. More precisely, since a framework is a graph and a placement, they show that local rigidity is independent of the coordinates of the vertices, provided they are algebraically independent over $\mathbb{Q}$.
Discovering a characterization of generic rigidity that respects this fact for $d > 2$ has been a long-standing open problem.

In 1864, Maxwell introduced the edge-count $3|V| - 6$ for frameworks in space \cite{maxwell}. \color{black} In modern terminology, a generically minimally rigid framework in $\mathbb{R}^d$ has exactly $d|V| - \binom{d+1}{2}$ edges, and every subgraph on $e'$ edges and $v' \geq d$ vertices satisfies the sparsity condition $e' \leq dv' - \binom{d+1}{2}$. In the plane, these edge-sparsity counts are also sufficient, a fact proved by Pollaczek-Geiringer in 1927 \cite{Pollaczek-Geiringer1} and rediscovered by Laman in 1970 \cite{laman}. For more background, see \cite{connelly1993rigidity, connelly2021frameworks, cruickshank2025rigidity, rigidity-textbook, Izmestiev2019, schulze2017rigidity, whiteley1996matroids}.

However, for $d > 2$, the planar results break down.  The
edge-sparsity counts are no longer sufficient, and the tree-decomposition
formulations equivalent to rigidity in the plane
\cite{crapo, lovasz-yemini} have proven difficult to generalize into higher dimensions.
This is obstructed by the existence of graphs that satisfy the sparsity counts but remain flexible for other reasons, the most famous example being the ``double banana'' in 3-space \cite{hyperbananas, rigidity-textbook}. While rigidity of body-bar frameworks was resolved by Tay \cite{tay} in all dimensions, bar-joint frameworks have resisted similar characterizations. 

White and Whiteley \cite{white-whiteley} characterize the special realizations of a generically isostatic graph by the vanishing of a single bracket polynomial, the pure condition $C(G)$. It is obtained from the determinant of a tied-down rigidity matrix by a series of Laplace expansions on the columns of each vertex in turn, a method they credit to Rosenberg \cite{rosenberg-crm}, followed by division by the tie-down factor $C(T)$. See also \cite[\S 60.4.2]{white-handbook}, where this is described as an algorithm to construct $C(G)$ directly from $G$. 
Their construction of the pure condition takes generic isostaticity as a hypothesis rather than the question. For Maxwell-count subgraphs that are not generically isostatic, they simply set $C(G) = 0$ \cite[\S5]{white-whiteley}.

If the Laplace expansion is combined with Young's straightening law on tableaux, one obtains an explicit and combinatorial way to decide rigidity. To the best of our knowledge this observation has not appeared in the literature, although all of the ingredients are present. For instance, straightening serves as a normal form and zero-test for bracket polynomials in \cite[\S 60.3]{white-handbook} and \cite{sturmfels-whiteley}, in both cases for projective-geometric computations rather than as a criterion for generic rigidity.

Our results differ in several ways. 
First, we construct the balancing conditions directly following paths in the $\Gamma$-oriented line graph $\mathcal{L}$, with no rigidity matrix, no Laplace expansion, and no tie-down. 
Second, we decide whether these balancing conditions vanish using combinatorial exchange rules on Young tableaux.  
Third, we do not assume generic isostaticity: Theorem \ref{thm:main-Maxwell-locally-flexible} decides it. Finally, given an affinely independent placement, we recover explicit formulas for all self-stresses in terms of simplex volumes.

\subsection{Main Results}

Before stating the main results, we explain some notation. By a \textbf{tableau} we mean a blue or red rectangular array of boxes, filled with entries from an ordered set $V$. For any $n$ elements $i_1, \dots, i_n \in V$ let $\pm[i_1 \dots i_n]$ denote the column filled in that order, with commutative product the concatenation of columns, blue for a minus sign, red otherwise, as in
\begin{equation*}
    [147] \equiv \color{red} \begin{ytableau}
        1\\4\\7
    \end{ytableau}\color{black} \equiv \color{blue} \begin{ytableau}
        1\\7\\4
    \end{ytableau} \color{black}
    \quad \text{ or } \quad
    -[254][237] \equiv \color{blue} \begin{ytableau}
        2&2\\5&3\\4&7
    \end{ytableau} \color{black} \equiv \color{blue} \begin{ytableau}
        2&2\\3&5\\7&4
    \end{ytableau} \color{black} \equiv \color{red} \begin{ytableau}
        2&3\\5&2\\4&7
    \end{ytableau}\color{black},
\end{equation*}
taken as alternating in the column entries. Thus $[147]\equiv [471] \equiv -[174]$ and odd permutations of boxes in a column swap the color. A multiset of tableaux like $\{\{ [245][345], [152], [152]\}\}$ may also be denoted by a sum like $[245][345] + [152] + [152]$. Tableaux that strictly increase down columns and weakly increase along rows are called \textbf{semistandard}, and appear frequently in representation theory and geometry.

We further allow tableaux certain \textbf{exchange rules}, following \cite{Fulton}. 
Any tableaux in a multiset may be exchanged as follows. Given a tableau $T$, fix two columns and $r$ boxes in the first column. Replace $T$ by all tableaux obtained by exchanging those $r$ boxes with every choice of $r$ boxes in the second column, maintaining the vertical order, with entries outside those boxes unchanged. Each resulting tableau inherits the same color as $T$, and vanishes if an entry is repeated within a column.

\begin{lemma}\label{lemma:semistandard-tableaux-span}
    Let $n \in \mathbb{Z}_{>0}$. Every multiset of tableaux on a finite, ordered set $V$ with fixed height $n$ may be reduced to a multiset of semistandard tableaux using exchange rules.
\end{lemma}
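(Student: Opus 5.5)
We prove this with the classical straightening algorithm of Young, following the structure of the proof in \cite{Fulton}, and we induct on a total order on tableaux of fixed shape. The first normalization is free. Within a column we may permute boxes, changing the color on odd permutations, so every column can be sorted to be strictly increasing; a column with a repeated entry vanishes. Since the product is commutative, we may also reorder columns so that they are weakly increasing in lexicographic order, read top to bottom. After this, a tableau fails to be semistandard only if some adjacent pair of columns $c, c'$ has a row $k$ with $c_k > c'_k$. Here $c_1<\dots<c_n$ and $c'_1<\dots<c'_n$.

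At such a violation we apply one exchange rule. We take the $r = n-k+1$ boxes $c_k,\dots,c_n$ in the first column. Among the $n+1$ entries $c'_1<\dots<c'_k<c_k<\dots<c_n$, this gives the standard Garnir-type relation. In the paper's sign conventions, the relation says that $T$ equals (same color) the sum over exchanges with every choice of $r$ boxes in the second column. We then rearrange this so that $T$ itself is replaced by the other terms. This point needs checking: one must verify that the exchange rule as stated (replace $T$ by all exchanged tableaux, same color) agrees with the Sylvester/Garnir identity $[c][c'] = \sum [c^{\text{exch}}][c'^{\text{exch}}]$. We check this by expanding the alternating sum over shuffles of the $n+1$ entries, which vanishes because it is alternating in $n+1$ vectors in an $n$-dimensional setting.

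The key step, and the main obstacle, is termination. We order column-sorted tableaux by comparing the word obtained by reading columns left to right, each top to bottom, in lexicographic order. Equivalently, as in \cite{Fulton}, we compare the last column at the first differing position. In each nonvanishing tableau produced, some entries among $c'_1,\dots,c'_k$, all smaller than $c_k$, move into the first column at positions at or below $k$. After re-sorting the columns, the first column becomes strictly smaller in the chosen order while columns to its left are untouched. So each new tableau is strictly smaller than $T$.

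Since $V$ is finite and the height and number of columns are fixed, there are finitely many tableaux of the given shape. The order is therefore well-founded, and repeated exchange terminates. When no exchange applies, every tableau has sorted columns with weakly increasing rows, that is, it is semistandard. Exchange rules preserve the number of columns, so the resulting multiset lies in the same shape. The detail requiring most care is that re-sorting columns, both within and between columns, does not undo the decrease. We handle this by always comparing the multiset of columns in sorted form, so the order is defined on equivalence classes and re-sorting is invisible to it.
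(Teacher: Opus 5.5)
Your proposal is correct and is essentially the paper's proof. You sort each column and order the columns lexicographically; at a violation $c_k>c'_k$ you exchange the bottom $n-k+1$ boxes of the left column; every nonvanishing result is strictly smaller in the column-lexicographic order, since the left column gains an entry smaller than $c_k$ while earlier columns are unchanged; and finiteness gives termination. Two small fixes: state the pigeonhole count the paper uses (only $n-k$ entries of $c'$ exceed $c'_k$, so every chosen set of $n-k+1$ boxes contains an entry $<c_k$), and drop the Garnir verification, which this purely combinatorial lemma does not need and which in any case describes a different relation from the paper's exchange rule (that rule is Sylvester's identity, used only later).
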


\begin{proof}
    First, put every column in strictly increasing order by permuting entries and swapping colors if necessary. Then arrange columns into dictionary (also called lexicographic) order. Second, if a tableau $T$ is not semistandard, there is some pair of adjacent columns $C_L,C_R$ and some row $k$ with $C_L(k) > C_R(k)$. Exchange $T$ using the last $n-k+1$ boxes in $C_L$ and all sets $X$ of $n-k+1$ boxes from $C_R$. Every such $X$ has some $x < C_L(k)$ because there are only $n-k$ entries bigger than $C_R(k)$ in $C_R$. If $x$ equals one of the $k-1$ entries above $C_L(k)$ then the resulting tableau vanishes by the alternating property. Otherwise when reordering the new column $C_L'$ to strictly increase, $x$ gets inserted before the $k$th entry, and hence in dictionary order $C_L'$ is strictly smaller than $C_L$. Since the dictionary-smaller columns before $C_L$ are unchanged, and $C_L$ became strictly smaller or vanishes, every tableau $S$ among those replacing $T$ is strictly smaller in dictionary order. Since $V$ is finite, there are finitely many possible fillings, and this process must terminate.
\end{proof}

\textbf{The orientation $\Gamma$}. Let $d \in \mathbb{Z}_{>0}$ and $G=(V,E)$ a finite simple graph. Fix an order on $V$. By a $d$-orientation $\Gamma$ on $G$ we mean an assignment of its edges as sources, streams, sinks, or deserts, as follows. First, for any vertex of degree $\leq d$, assign all its edges as deserts. Repeat this process, assigning edges as deserts if they are incident to any vertex with $d$ or less non-desert edges, until any remaining non-desert edges form a graph with minimum degree $d+1$. For each vertex $a \in V$ in the non-desert graph, choose any $d$ of its adjacent edges as \textit{incoming} at $a$, and the others as \textit{outgoing} at $a$. Later, we will show our results are independent of this choice, which determines $\Gamma$ as follows. 
An edge $(i,j)$ incoming at both endpoints, sometimes denoted $(i,j)_{i,j}$, is a \textbf{source}, because paths in $\mathcal{L}$ will begin at sources. An edge incoming at one endpoint $i$, outgoing at the other $j$, sometimes denoted $(i,j)_i$, is a \textbf{stream}. An edge outgoing at both endpoints, sometimes denoted $(i,j)_\emptyset$, is a \textbf{sink}, because paths in $\mathcal{L}$ will terminate at sinks. The last requirement on $\Gamma$ is to remove cycles. The stream edges form a digraph. Remove any cycles in the stream digraph by replacing a pair of adjacent streams $(a,b)_b$ and $(b,c)_c$ by a sink $(a,b)_\emptyset$ and a source $(b,c)_{b,c}$. This is equivalent to replacing $(a,b)$ by $(b,c)$ in the choice of $d$ edges incoming at $b$.

\textbf{The $\Gamma$-oriented line graph $\mathcal{L}$}. Let $\mathcal{L}$ be the directed graph whose nodes are edges of $G$ and whose arrows point between adjacent edges $\xi \to \eta$ whenever $\xi$ is oriented into a vertex that $\eta$ is oriented out of. Assign the larger endpoint of each source as negative.  Then if $\xi = (a,b)$ is oriented into $a$ and $\eta = (a,c)$ is oriented out of $a$, we say the arrow $\xi \to \eta$ uses $a$, and there are vertices $j_1 <\dots <j_{d-1}$ with edges also oriented into $a$. 
If $\mathfrak{a} \in \mathcal{L}$ leaves a stream or leaves a source from its positive endpoint, associate two weights $\mathfrak{u}_\mathfrak{a} = [j_1 j_2 \dots j_{d-1}ac]$ and $\mathfrak{d}_\mathfrak{a} = [j_1 j_2 \dots j_{d-1} b a]$, called the up-weight and down-weight. If $\mathfrak{a}$ leaves a source from its negative endpoint, use $\mathfrak{u}_\mathfrak{a} = -[j_1 j_2 \dots j_{d-1}ac]$ instead.
Since the stream digraph has no cycles, $\mathcal{L}$ is acyclic. We call the weighted directed acyclic graph $\mathcal{L}$ the $\Gamma$-oriented line graph. The up-weight of a path in $\mathcal{L}$ is the product of the up-weights of its arrows, and similarly for the down-weight.

\textbf{Balancing conditions}. Suppose $\Gamma$ has $\ell$ sinks $\nu_1,\dots, \nu_\ell$. For any $\ell$ sources $\mu_1,\dots,\mu_\ell$, a path system $\mathcal{P}=(\pi,\mathcal{P}_1,\dots,\mathcal{P}_\ell)$ is a permutation $\pi \in S_\ell$ and paths from the sources to the permuted sinks, so that $\mathcal{P}_j$ is a path from $\mu_j$ to $\nu_{\pi_j}$. A path system is disjoint if none of the paths share a node.
The up-weight $u(\mathcal{P})$ of a path system is the product of the up-weights of its paths, and similarly for down-weight $d(\mathcal{P})$. 
The weight $wt(\mathcal{P})$ of a path system is the product of its up-weight with the down-weights of \textit{all other} disjoint path systems connecting the chosen sources to the $\ell$ sinks, times the sign of its permutation. Finally, the \textbf{balancing condition} is the multiset
\begin{equation}\label{eqn:LGV-balancing-condition}
    \{\{ wt(\mathcal{P}) \}\}
\end{equation}
with one element for each disjoint path system between the sources $\mu_1,\dots,\mu_\ell$ and sinks $\nu_1,\dots, \nu_\ell$. Thus there is one balancing condition for each choice of $\ell$ sources. By Lemma \ref{lemma:semistandard-tableaux-span} we can apply exchange rules to each balancing condition until the resulting multiset, denoted $\overline{\{\{ wt(\mathcal{P}) \}\}}$, contains only semistandard tableaux.

Let $\Gamma$ be a $d$-orientation on the graph $G$, independent of any placement. We call $\Gamma$ \textbf{balanced} if $|\text{sinks}|>|\text{sources}|$ or if $|\text{sources}| \geq |\text{sinks}|=\ell$ and for every choice of $\ell$ sources, $\overline{\{\{ wt(\mathcal{P}) \}\}}$ contains each semistandard tableau in equal amounts red and blue. We will prove this condition is well-defined. An all-desert orientation is not balanced. We will also prove that if $\Gamma$ and $\Gamma'$ are two $d$-orientations on $G$, then $\Gamma$ is balanced $\iff$ $\Gamma'$ is balanced. We say $G$ is \textbf{$d$-balanced} if one (and hence every) $d$-orientation $\Gamma$ is balanced.

\begin{theorem}\label{thm:main-Maxwell-locally-flexible}
    Let $G$ have $|V|\geq d$ and $|E| = d|V| - \binom{d+1}{2}$. Then $G$ is $d$-balanced $\iff$ $G$ is generically locally flexible in $\mathbb{R}^d$.
\end{theorem}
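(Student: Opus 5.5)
Throughout, the key dictionary is between the rigidity matrix and bracket polynomials. For a Maxwell-count graph, generic flexibility is equivalent, by Asimow--Roth, to the square matrix obtained from the rigidity matrix $R(G,p)$ (with $|E|$ rows and $d|V|$ columns) by appending $\binom{d+1}{2}$ generic tie-down rows having identically vanishing determinant as a polynomial in the coordinates of $p$. So the plan is to show that this determinant, after dividing out the tie-down factor, equals, up to a nonzero scalar, the bracket polynomial $\sum_{\mathcal{P}} wt(\mathcal{P})$ attached to a balancing condition. We then use that exchange rules are exactly the Grassmann--Plücker relations, and that semistandard tableaux form a basis of the bracket ring (standard monomial theory \cite{Fulton}). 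Under that basis, a multiset of red and blue semistandard tableaux evaluates to the zero polynomial in generic $p$ iff each tableau appears equally often red and blue. Red/blue is a sign, so a multiset is literally a signed sum, and Lemma \ref{lemma:semistandard-tableaux-span} reduces it to the unique standard expansion.

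\textbf{Step 1: the desert reduction.} A vertex of degree $\le d$ contributes a block of columns that, by Laplace expansion along those columns, either factors out a generic nonzero bracket or forces a kernel vector. If it has degree $<d$ the framework is flexible; the intended convention that an all-desert orientation is not balanced then needs care, so I would check the count to see whether an all-desert $\Gamma$ is compatible with Maxwell count. Iterating this peeling reduces the problem to the non-desert subgraph, where every vertex has degree at least $d+1$, together with the bookkeeping of how many edges were peeled.

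\textbf{Step 2: Lindstr\"om--Gessel--Viennot.} On the non-desert core, choose the $d$ incoming edges at each vertex $a$. Take the Laplace expansion of the $d$ columns of $a$ restricted to those $d$ incoming rows. For the edges $(a,j_i)$ this gives the $d\times d$ minor with rows $p_{j_i}-p_a$, which is the bracket $[j_1\dots j_d\,a]$ in homogeneous coordinates. Eliminating vertex variables one at a time, the way Rosenberg does and \cite{white-whiteley} describe, turns the remaining rows into a transfer-matrix form: an outgoing edge $(a,c)$ is rewritten via Cramer's rule in terms of incoming ones, with coefficients that are ratios $\mathfrak{u}_\mathfrak{a}/\mathfrak{d}_\mathfrak{a}$ of the up- and down-weights. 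The resulting square matrix, indexed by sources and sinks, has entries equal to path sums in the acyclic graph $\mathcal{L}$. Acyclicity is exactly what the stream-cycle removal guarantees. LGV then expresses its determinant as a signed sum over disjoint path systems. Clearing denominators by multiplying by the product of the down-weights of all other path systems yields precisely $\sum wt(\mathcal{P})$. Because the determinant with tie-downs is nonzero iff some maximal minor choice is, each choice of $\ell$ sources corresponds to a choice of tie-down/extra rows. Balanced for every choice is then equivalent to the vanishing of all maximal minors, that is, flexibility. The case with more sinks than sources corresponds to the rank being automatically deficient.

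\textbf{Main obstacle.} The hard step is Step 2: getting the signs right (the negative endpoint convention for sources, the permutation sign, and the column orientations) and proving that the cleared denominators introduce no spurious factor that vanishes generically. Well-definedness and independence of $\Gamma$ should then follow, because both are equal up to a nonzero factor to the same determinant. I would first verify the correspondence on $K_4$ in $d=2$ and on the double banana in $d=3$, and then argue by induction on the number of vertex eliminations.
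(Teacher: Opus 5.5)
Your proposal has two genuine gaps.

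\textbf{The tableau step.} It is not true that a signed sum of semistandard tableaux vanishes at a generic placement only if each tableau appears equally often red and blue. The brackets are evaluated on the affine matrix $M$, whose last row is $(1,\dots,1)$, not on a generic $(d+1)\times|V|$ matrix. Its maximal minors satisfy extra linear relations. For instance, $\sum_{k=1}^{d+2}(-1)^k[i_1\cdots\widehat{i_k}\cdots i_{d+2}]=0$ holds identically on $M$, being a determinant with two rows of ones. For $d=1$ this is $[12]-[13]+[23]=0$, a combination of three distinct semistandard tableaux. So standard monomial theory alone gives only ``straightens to zero $\Rightarrow$ vanishes at $p$''. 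It does not give the converse, and the converse is exactly what you need for ``flexible $\Rightarrow$ balanced''.

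The paper closes this with multi-homogeneity. For an arrow $\xi\to\eta$ one has $\deg frac(\mathfrak{a})=\deg\eta-\deg\xi$, so degrees telescope along paths. Hence every $wt(\mathcal{P})$ in a balancing condition has the same vertex multidegree. Rescaling the columns of any matrix with nonzero last row to affine form then shows that vanishing on all affine matrices implies vanishing on a dense set of matrices, hence membership in the Pl\"ucker ideal.

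\textbf{Step 2.} All the content lies here, and you only sketch it, aimed at the wrong object. The tie-down determinant is essentially one polynomial (the pure condition times $C(T)$). But there is one balancing condition for each choice of $\ell$ of the $k$ sources, and the theorem needs flexibility to be equivalent to \emph{all} of them straightening to zero. Nothing in your sketch shows why each one is controlled by that determinant. Your correspondence between source choices and tie-downs is asserted, not proved, and the two are different kinds of objects: a tie-down deletes columns of the rigidity matrix, i.e.\ vertex equations, while a source choice deletes rows of a $k\times\ell$ source--sink matrix. Your elimination also runs backwards. At a vertex the invertible $d\times d$ block belongs to the incoming edges, so Cramer's rule expresses incoming stresses in terms of outgoing ones, not the reverse.

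The missing idea is to work with the left kernel rather than a determinant, as the paper does:
\begin{itemize}
\item At an affinely independent $p$, a self-stress $wA=0$ vanishes on deserts.
\item Cramer's rule gives $w_{ab}=\sum_k frac(\mathfrak{a}_k)|_p\,w_{ac_k}$ for each incoming edge $(a,b)$, where $\mathfrak{a}_k$ is the arrow $(a,b)\to(a,c_k)$. So streams are path sums of sink variables.
\item Each source, computed from both endpoints, yields one linear equation in the $\ell$ sink variables. Conversely, every solution of these $k$ equations extends to a self-stress.
\item Hence a nonzero self-stress exists iff $k<\ell$ or every $\ell\times\ell$ minor of the path-sum matrix $Q$ vanishes.
\item Lindstr\"om--Gessel--Viennot rewrites each minor as $\sum frac(\mathcal{P})$. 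Multiplying by the product $D$ of down-weights, which is nonzero at affinely independent $p$, gives $\sum wt(\mathcal{P})$.
\item The Maxwell count enters only at the end: $A$ has $m$ rows and rank at most $m$, so infinitesimal flexibility is equivalent to a nonzero self-stress.
\end{itemize}
This route needs no sign or tie-down-factor bookkeeping, since only vanishing matters, and it makes your Step 1 unnecessary. Note also that Step 1's claim that a vertex of degree $<d$ forces flexibility is false for $K_d$, which satisfies the hypotheses and is rigid.
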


\begin{corollary}\label{cor:characterization}
    Let $G$ have $|V| \geq d$ and set $m = d|V| - \binom{d+1}{2}$. Then $G$ is generically rigid in $\mathbb{R}^d$ $\iff$ $G$ contains a spanning $m$-edge subgraph that is not $d$-balanced.
\end{corollary}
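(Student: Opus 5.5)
The statement to prove is Corollary \ref{cor:characterization}. I will deduce it from Theorem \ref{thm:main-Maxwell-locally-flexible} together with the classical Asimow--Roth facts recalled in the introduction. Their content is that generic local rigidity equals generic infinitesimal rigidity. For $|V|\ge d$, generic infinitesimal rigidity means the rigidity matrix at a generic placement $p$ has rank $m = d|V|-\binom{d+1}{2}$. Throughout I fix a generic $p$, meaning one with algebraically independent coordinates, so that every subgraph is simultaneously in generic position.

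For ($\Leftarrow$), suppose $H\subseteq G$ is a spanning subgraph with $m$ edges that is not $d$-balanced. Since $H$ is spanning, it has $|V(H)|=|V|\ge d$ and $|E(H)|=m$. Theorem \ref{thm:main-Maxwell-locally-flexible} therefore applies to $H$ and says that $H$ is not generically locally flexible, i.e. $H$ is generically rigid in $\mathbb{R}^d$. Adding edges only adds rows to the rigidity matrix, so its rank cannot drop. Hence $G$, which has the same vertex set and contains $H$, is generically infinitesimally rigid, and by Asimow--Roth it is generically rigid.

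For ($\Rightarrow$), suppose $G$ is generically rigid. Then the rigidity matrix $R(G,p)$ has rank $m$, so I can choose $m$ linearly independent rows. Let $H$ be the subgraph on all of $V$ whose edges are the corresponding $m$ edges. By construction $H$ is spanning with exactly $m$ edges, and $R(H,p)$ has rank $m$, so $H$ is infinitesimally rigid at $p$. Because $p$ is generic, Asimow--Roth gives that $H$ is generically rigid, i.e. not generically locally flexible. Theorem \ref{thm:main-Maxwell-locally-flexible} then says $H$ is not $d$-balanced, which is the required subgraph.

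Two points need care, and I expect them to be the only obstacles.
\begin{enumerate}
\item The count $m$ must be the correct generic rank when $|V|\ge d$. The space of trivial motions has dimension $\binom{d+1}{2}$ only when the affine span of $p(V)$ has dimension at least $d-1$. This holds for $|V|\ge d$ generic points, so the kernel dimension and the rank $m$ are as claimed.
\item $H$ must be spanning, so that Theorem \ref{thm:main-Maxwell-locally-flexible} is applied with the same vertex count. An isolated vertex has zero columns in every chosen row, and such a vertex would be flexible. I avoid this by defining $H$ on the vertex set $V$ regardless of which edges were chosen. Since $H$ is rigid, no vertex of $H$ is isolated, so the construction is consistent.
\end{enumerate}
With these two points checked, both directions follow directly from Theorem \ref{thm:main-Maxwell-locally-flexible}.
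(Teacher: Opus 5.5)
Your proposal is correct and follows essentially the same route as the paper. In the forward direction you extract $m$ independent rows of the generic rigidity matrix to get a spanning subgraph $H=(V,F)$ of rank $m$, then apply Theorem~\ref{thm:main-Maxwell-locally-flexible}; conversely you apply the theorem to $H$ and note that the extra edges of $G$ only add rows, with $\operatorname{rank}\le m$ coming from the $\binom{d+1}{2}$-dimensional space of trivial motions. Your side remark that a rigid $H$ has no isolated vertex plays no logical role (and fails in the degenerate case $d=1$, $|V|=1$, $m=0$), so you can drop it.
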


The proof of Theorem \ref{thm:main-Maxwell-locally-flexible} uses the Pl\"ucker relations on the Grassmannian of $(d+1)$-dimensional subspaces in $\mathbb{R}^V$. This is particularly fitting in view of Grassmann's original applications of his exterior algebra to statics.
In the Ausdehnungslehre of 1844, he represented forces acting on rigid bodies as line-bound magnitudes of second order, defined the moment of a force as an exterior product, and even described the concept of moment in statics as providing a natural justification for the exterior product and its sign rule \cite{Grassmann1844}.
Also relevant is the projective viewpoint on statics and infinitesimal rigidity developed by Darboux and Sauer \cite{Darboux1993,Sauer1935}. See also \cite{Izmestiev-projective, Izmestiev2019, NixonSchulzeWhiteley2021} for modern treatments.

To our knowledge, Theorem \ref{thm:main-Maxwell-locally-flexible} and Corollary \ref{cor:characterization} give the first characterization of generic rigidity for $d\geq 3$ in which the conditions are constructed directly from the graph and verified by combinatorial operations, without forming or expanding the tied-down rigidity matrix determinant.
Following paths in the $\Gamma$-oriented line graph effectively condenses all information into the balancing conditions. Thus, our characterization uses only the combinatorics of the graph and the combinatorics of Young tableaux. It would be interesting to find a characterization using only the graph, somehow hiding the exchange rules in purely graph-theoretic conditions. The tableaux in $\{\{wt(\mathcal P)\}\}$ are of course intimately related to the graph, arising from disjoint path systems connecting sources to sinks.

Although Theorem \ref{thm:main-Maxwell-locally-flexible} and Corollary~\ref{cor:characterization} give an explicit characterization of generic rigidity, we make no claim that the resulting conditions can be checked in polynomial time. Lemma \ref{lemma:semistandard-tableaux-span} guarantees that the exchange rules terminate, so the criterion is a decision procedure.
But we do not bound the number of exchange rules required, nor the number of tableaux produced along the way. Subsection \ref{subsec:more-efficient-denominator-clearing} will show that the number of columns in the tableaux can always be taken $\leq |E|$.
Understanding the computational complexity of these conditions, and whether their structure can be exploited to obtain an efficient algorithm, remains an interesting open question.

\subsection{Geometric results}
Besides the combinatorial Theorem \ref{thm:main-Maxwell-locally-flexible} above, we also have more general results applying to affinely independent placements. A placement $p:V \to \mathbb{R}^d$ is called \textbf{affinely independent} if any set of at most $d+1$ vertices are affinely independent. In this section, we write the multiset $\{\{ wt(\mathcal{P}) \}\}$ as a sum $\sum wt(\mathcal{P})$. Given an affinely independent placement, we may evaluate $\sum wt(\mathcal{P})$ at $p$ by replacing each column with the corresponding maximal minor of the matrix $M$ whose $i$th column is $(p_i, 1)$, sometimes writing $\sum wt(\mathcal{P})|_p$ for this evaluation at $p$.

When there are non-desert edges, we say $\Gamma$ is \textbf{balanced at $p$} if $|\text{sinks}| > |\text{sources}|$, or if $|\text{sources}| \geq |\text{sinks}|$ and for every choice of $\ell$ sources, the balancing condition $\sum wt(\mathcal{P})$ evaluates to zero at $p$. An all-desert orientation is not balanced at $p$. We will prove that if $\Gamma$ and $\Gamma'$ are two orientations on $G$, then $\Gamma$ is balanced at $p$ $\iff$ $\Gamma'$ is balanced at $p$. Thus, we call $G$ \textbf{balanced at $p$} if one (and hence every) $\Gamma$ is balanced at $p$. 

For $p:V \to \mathbb{R}^d$ we write $p(i)=p_i = (p_{i1},\dots, p_{id}) \in \mathbb{R}^d$.
For each $i,j \in V$ let $e_{ij} = p_j - p_i$, so that $e_{ij} = -e_{ji}$ and for each $(i,j) \in E$ let $w_{ij}=w_{ji}$ be a variable. For each vertex $i \in V$, consider the vertex equation $\sum_j w_{ij} e_{ij} = 0 \in \mathbb{R}^d$, where the sum is over all vertices $j$ with $(i,j) \in E$. Order the edges forming a tuple $w = (w_{ij})$ and let $A$ be the coefficient matrix of the linear system of equations $wA = 0$ corresponding to all vertex equations. When $|V| \geq d$ the framework is infinitesimally rigid if $\dim \text{right ker } A = \binom{d+1}{2}$, and infinitesimally flexible otherwise. 

\begin{theorem}\label{thm:balanced-equivalent-to-selfstress}
    Let $p:V \to \mathbb{R}^d$ be an affinely independent placement of $G$. Then $G$ is balanced at $p$ $\iff$ there exists $w\neq 0$ with $wA = 0$, called a nonzero self-stress.
\end{theorem}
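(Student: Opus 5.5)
We will prove Theorem \ref{thm:balanced-equivalent-to-selfstress} by reducing the linear system $wA=0$ to a smaller square system indexed by sources and sinks. Then we recognise the relevant determinants as the sums $\sum wt(\mathcal{P})|_p$ via the Lindström--Gessel--Viennot lemma on the acyclic graph $\mathcal{L}$.

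\textbf{Step 1: deserts carry no stress.} Suppose a vertex $a$ has at most $d$ non-desert incident edges, and $w$ is a self-stress. Then its vertex equation $\sum_j w_{aj}e_{aj}=0$ involves at most $d$ vectors. These are linearly independent because $p$ is affinely independent, so all $w_{aj}=0$. Inducting along the desert-peeling order shows every desert edge has $w=0$. Hence self-stresses of $G$ are exactly self-stresses of the non-desert graph. If every edge is a desert, there is no nonzero stress, which matches the convention that an all-desert orientation is not balanced.

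\textbf{Step 2: propagation along $\mathcal{L}$.} At each non-desert vertex $a$, the $d$ incoming edges $(a,b), (a,j_1),\dots,(a,j_{d-1})$ give a basis $e_{ab},e_{aj_1},\dots$ of $\mathbb{R}^d$. The vertex equation therefore expresses each incoming stress $w_{ab}$ as a linear combination of the outgoing stresses $w_{ac}$. By Cramer's rule the coefficient is a ratio of $d\times d$ determinants. After homogenising with the row of ones, each determinant is a maximal minor of $M$. Concretely, the coefficient of $w_{ac}$ in $w_{ab}$ is $\pm\,\mathfrak{u}_\mathfrak{a}/\mathfrak{d}_\mathfrak{a}$ evaluated at $p$, where $\mathfrak{a}$ is the arrow $(a,b)\to(a,c)$.

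This is where the orientation matters:
\begin{itemize}
\item A stream is incoming at exactly one endpoint, so it receives one equation.
\item A source receives two equations, one from each endpoint, which must agree.
\item A sink is free.
\end{itemize}
Since $\mathcal{L}$ is acyclic, we iterate from sources toward sinks. Every edge stress then becomes a linear function of the sink stresses $w_{\nu_1},\dots,w_{\nu_\ell}$, with coefficients equal to sums over paths of products $\mathfrak{u}/\mathfrak{d}$. Each source $\mu$ contributes one consistency equation: the expansion via its positive endpoint minus the expansion via its negative endpoint equals zero. This difference is what the sign convention on $\mathfrak{u}$ for the negative endpoint encodes.

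So self-stresses correspond bijectively to solutions $x\in\mathbb{R}^\ell$ of $Bx=0$. Here $B$ is the (#sources)$\times\ell$ matrix with entries
\[
B_{\mu\nu}=\sum_{\text{paths }\mu\to\nu}\prod \mathfrak{u}/\mathfrak{d}\,\Big|_p .
\]

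\textbf{Step 3: rank test via determinants.} A nonzero stress exists iff $B$ has rank $<\ell$. If there are more sinks than sources, this holds automatically, matching that case of the definition of balanced. Otherwise, rank $<\ell$ holds iff every $\ell\times\ell$ minor vanishes, and there is one minor for each choice of $\ell$ sources.

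By Lindström--Gessel--Viennot on the acyclic $\mathcal{L}$, each such minor equals a signed sum over disjoint path systems of products of path weights $u(\mathcal{P})/d(\mathcal{P})$. Clearing denominators by multiplying by the product of the down-weights of all disjoint path systems gives exactly $\sum wt(\mathcal{P})|_p$. The multiplier is nonzero because affine independence makes every minor $[\cdots]$ with distinct entries nonzero. So the minor vanishes iff the balancing condition vanishes at $p$, which proves the theorem.

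\textbf{Main obstacle.} The hard step is the sign and weight bookkeeping in Step 2. Three things must be checked:
\begin{itemize}
\item the Cramer coefficient is exactly $\mathfrak{u}_\mathfrak{a}/\mathfrak{d}_\mathfrak{a}$, with the column order given, after passing from vectors $e_{ab}$ to homogeneous minors of $M$;
\item the two-endpoint consistency at a source yields precisely the sign flip on the negative-endpoint up-weights;
\item the source row is correctly normalised by $w_\mu$ itself.
\end{itemize}
We would first verify these in a single-vertex example, using the identity $\det(e_{a j_1},\dots,e_{a j_d})=\pm[j_1\dots j_d\,a]|_p$. A secondary point is that the clearing denominator must be common to all terms. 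Taking the product over all disjoint systems, rather than a minimal common denominator, guarantees this at the cost of larger tableaux.
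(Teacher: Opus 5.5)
Your proposal is correct and follows the paper's proof essentially step for step: desert edges are forced to zero, Cramer's rule at each vertex propagates stresses along $\mathcal{L}$ to the sinks, and each source contributes a consistency equation (positive-endpoint expansion minus negative-endpoint expansion). A nonzero stress then exists iff the maximal minors of the source--sink path matrix all vanish, and these minors are identified with $\sum wt(\mathcal{P})|_p$ via Lindstr\"om--Gessel--Viennot after multiplying by the nonzero product of down-weights. Your sign worry resolves cleanly: $\det(e_{aj_1},\dots,e_{aj_{d-1}},e_{ax})=[j_1\dots j_{d-1}xa]|_p$, so the Cramer coefficient is exactly $+\mathfrak{u}_\mathfrak{a}/\mathfrak{d}_\mathfrak{a}$, and the minus sign appears only through the negative-endpoint convention. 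Because your argument works for an arbitrary $d$-orientation, it also gives the orientation-independence that makes ``$G$ balanced at $p$'' well-defined, as the paper does.
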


\begin{corollary}\label{cor:affine-Maxwell-infinitesimally-flexible}
    Let $p:V \to \mathbb{R}^d$ be an affinely independent placement of $G$, with $|V| \geq d$ and $|E| = d|V| - \binom{d+1}{2}$. Then $G$ is balanced at $p$ $\iff$ $G,p$ is infinitesimally flexible.
\end{corollary}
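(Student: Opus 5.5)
The plan is to deduce the corollary directly from Theorem \ref{thm:balanced-equivalent-to-selfstress} together with a rank count for the matrix $A$. The first step is to record the dimension identity. $A$ is an $|E| \times d|V|$ matrix. Its right kernel always contains the $\binom{d+1}{2}$-dimensional space of trivial infinitesimal motions. This dimension is attained because an affinely independent placement with $|V|\geq d$ has affine span of dimension at least $d-1$. For such spans the trivial motions are linearly independent once we handle the degenerate case $|V|=d$, where the span is a hyperplane. I expect that case to need a short separate check, or the standard convention that frameworks spanning a hyperplane have $\binom{d+1}{2}$ trivial motions. Rank--nullity then gives
\begin{equation*}
\dim \text{right ker } A = d|V| - \operatorname{rank} A, \qquad \dim \text{left ker } A = |E| - \operatorname{rank} A .
\end{equation*}

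Next, substitute $|E| = d|V| - \binom{d+1}{2}$. This gives
\begin{equation*}
\dim \text{right ker } A - \binom{d+1}{2} = \dim \text{left ker } A .
\end{equation*}
So infinitesimal flexibility, meaning $\dim \text{right ker } A > \binom{d+1}{2}$, holds exactly when the left kernel is nonzero. A nonzero left-kernel vector is precisely a nonzero self-stress $w$ with $wA=0$. Here we use the lower bound $\dim \text{right ker } A \geq \binom{d+1}{2}$ from the first step, which guarantees that the right kernel never falls below $\binom{d+1}{2}$.

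Finally, Theorem \ref{thm:balanced-equivalent-to-selfstress} states that, for an affinely independent $p$, $G$ is balanced at $p$ if and only if a nonzero self-stress exists. Chaining the equivalences gives: balanced at $p$ $\iff$ nonzero self-stress $\iff$ $G,p$ infinitesimally flexible.

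The main obstacle is the first step: we need the trivial motions to have dimension exactly $\binom{d+1}{2}$ and to lie in the right kernel. Containment in the kernel is standard, since translations and infinitesimal rotations satisfy $(u_j-u_i)\cdot e_{ij}=0$. For the exact dimension, I would use affine independence to select $\min(|V|,d+1)$ vertices in general position. On these vertices, the map sending a skew matrix plus a translation to the induced velocities is injective when the points affinely span a space of dimension at least $d-1$. I would verify this injectivity directly: a skew $S$ with $Sp_i+t=0$ for $d$ affinely independent points forces $S=0$ and $t=0$.
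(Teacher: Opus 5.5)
Your proposal is correct and is essentially the paper's argument. The edge count gives $\dim \text{right ker } A - \binom{d+1}{2} = \dim \text{left ker } A$, so infinitesimal flexibility is equivalent to a nonzero self-stress, and Theorem \ref{thm:balanced-equivalent-to-selfstress} finishes.

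The step you call the main obstacle is not needed under the paper's definition of flexibility (via $\dim \text{right ker } A$), since $\operatorname{rank} A \le |E|$ already forces $\dim \text{right ker } A \ge \binom{d+1}{2}$. Your injectivity check using affine independence only matters if you use the motion-based definition of infinitesimal rigidity.
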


Note that Theorem \ref{thm:balanced-equivalent-to-selfstress} applies to graphs with any number of edges. In Section \ref{sec:examples} we give several examples, and in Section \ref{sec:proofs} we give the proofs. In particular, Lemma \ref{lemma:stream-formula-new} of Section \ref{sec:proofs} will give explicit formulas for any self-stress in terms of the sink variables, valid at any affinely independent placement.

\section{Examples}\label{sec:examples}

Before giving examples, we include a simplifying result, convenient for computations, which will be proved in Section \ref{sec:proofs}.

\begin{lemma}\label{lemma:remove-common-factors}
    Let $\{\{ wt(\mathcal{P}) \}\}$ be a balancing condition associated to some choice of sources in a $d$-orientation, such that every tableau contains some common columns. Let $\{\{ wt(\mathcal{P}) \}\}_{\text{reduced}}$ denote the multiset with common columns removed. Then $\overline{\{\{ wt(\mathcal{P}) \}\}}$ contains each semistandard tableau in equal amounts red and blue $\iff$ $\overline{\{\{ wt(\mathcal{P}) \}\}_{\text{reduced}}}$ does, provided all common columns have been permuted into strictly increasing order, and colors of tableaux adjusted accordingly, before removal.
\end{lemma}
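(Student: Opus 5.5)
Our plan is to turn the combinatorial statement into a statement about polynomials. Once that is done, removing common columns is simply cancelling a nonzero factor in an integral domain. Let $R=\mathbb{Z}[x_{ij}]$ with $i\in V$, $1\le j\le n$, where $n=d+1$ is the common column height. Map each height-$n$ column $[i_1\dots i_n]$ to the $n\times n$ minor of the generic $n\times|V|$ matrix $X$ on columns $i_1,\dots,i_n$. Extend this to tableaux by taking products, with a sign of $-1$ for blue, and to multisets by taking sums. Call the resulting map $\Phi$. The alternating convention for columns agrees with the alternating property of minors, so $\Phi$ is well defined on the paper's equivalence classes of tableaux.

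The first step is to check that every exchange rule preserves $\Phi$. This is Sylvester's lemma, equivalently the Pl\"ucker relations on the Grassmannian, in the form used by Fulton: replacing $T$ by the sum of its exchanges over all choices of $r$ boxes leaves the polynomial unchanged. The same holds for vanishing tableaux, since a column with a repeated entry has minor zero. The second step is the standard basis theorem: semistandard tableaux map to linearly independent polynomials, namely the standard monomials in the Pl\"ucker coordinates. Combined with Lemma~\ref{lemma:semistandard-tableaux-span}, this gives the key fact. For any multiset $M$, the reduced multiset $\overline{M}$ has each semistandard tableau red and blue equally often if and only if $\Phi(M)=0$. In particular the condition does not depend on the sequence of exchanges chosen, which also settles the ``well-defined'' claim.

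Now let $M=\{\{wt(\mathcal{P})\}\}$ and suppose every tableau contains the columns $C_1,\dots,C_k$. First permute each $C_i$ into increasing order, adjusting colors; this does not change $\Phi(M)$. Each tableau is now literally $C_1\cdots C_k\cdot T'$, where $T'$ is the corresponding tableau of $M_{\text{reduced}}$ carrying the adjusted color. Hence $\Phi(M)=\prod_i\Phi(C_i)\cdot\Phi(M_{\text{reduced}})$. Each increasing column has distinct entries, so its minor is a nonzero polynomial. Since $R$ is an integral domain, $\Phi(M)=0\iff\Phi(M_{\text{reduced}})=0$, and applying the key fact to both sides proves the lemma.

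We expect the main obstacle to be the second step: citing the standard basis theorem and Sylvester's lemma cleanly, with the sign and color conventions matching Fulton's formulation. One subtlety needs care. If a common column has a repeated entry, then every tableau of $M$ vanishes while $M_{\text{reduced}}$ need not. We would exclude this case, noting that the proviso of strictly increasing order already presupposes distinct entries. We would also confirm that the color bookkeeping in the factorization is consistent, meaning the sign of each tableau equals the product of the sign of its reduced part and the signs from sorting the common columns.
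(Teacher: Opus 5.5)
Your proposal is correct and follows the paper's route. The paper's one-line proof combines Lemma~\ref{lemma:red-blue-equiv-straighten-to-zero} (red/blue balance $\iff$ straightening to zero, which you re-derive via $\Phi$, Sylvester's identity and the standard basis theorem) with primality of the Pl\"ucker ideal, and cancelling the nonzero minors $\Phi(C_i)$ in the domain $\mathbb{Z}[x_{ij}]$ is exactly why that ideal is prime. Your repeated-entry caveat never arises for balancing conditions: every bracket in a weight involves the $d+1$ distinct vertices $j_1,\dots,j_{d-1},a$ and $b$ or $c$ of a simple graph.
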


Also, to illustrate the $\Gamma$-oriented line graphs $\mathcal{L}$ in Figures \ref{fig:planar-envelope}, \ref{fig:octahedron}, and \ref{fig:planar-envelope-two-sinks}, the weights are drawn as fractions. Let $frac(\mathfrak{a}) = \mathfrak{u}_\mathfrak{a} / \mathfrak{d}_\mathfrak{a}$ and for $\mathcal{P} = (\pi, \mathcal{P}_1,\dots, \mathcal{P}_\ell)$ let $frac(\mathcal{P}) = \text{sgn} \, \pi \cdot \prod_{j=1}^\ell \prod_{\mathfrak{a} \in \mathcal{P}_j} frac(\mathfrak{a})$, the product over all arrows in all paths in the system, times the sign of the permutation.

\begin{example}\label{example:planar-envelope}
For $d=2$, consider the graph $G$ on six vertices with $\Gamma$ given by the sink $(1,2)_\emptyset$, streams $(1,5)_1$, $(2,3)_2$, $(2,4)_2$, $(5,6)_5$ and sources $(1,3)_{1,3}$, $(3,6)_{3,6}$, $(4,5)_{4,5}$, $(4,6)_{4,6}$ with negative endpoints the larger of the two vertices. The $\Gamma$-oriented line graph $\mathcal{L}$ is illustrated in Figure \ref{fig:planar-envelope}, with arrows as circular arcs between edges of $G$ and weights written as fractions $frac(\mathfrak{a}) = \mathfrak{u}_\mathfrak{a}/\mathfrak{d}_\mathfrak{a}$. This makes it easy to visually follow paths from source to sink, and read off their weights.

\begin{figure}[!ht]
\resizebox{0.45\textwidth}{!}{%
\begin{tikzpicture}[
    scale=1.2,
    blue double/.style={
        line width=3.5pt, blue, {Stealth[length=7mm, width=6mm]}-{Stealth[length=7mm, width=6mm]}
    },
    green single/.style={
        line width=3.5pt, green!70!black, -{Stealth[length=7mm, width=6mm]}
    },
    arch/.style={
        very thick, gray!90!black, {Bar[width=3mm, line width=1.5pt]}-{Stealth[length=4mm, width=3mm]}
    },
    frac/.style={
        text=gray!90!black, font=\Large, fill=white, inner sep=2pt
    }
]

\node[font=\Huge\bfseries, inner sep=5pt] (4) at (0, 0) {4};
\node[font=\Huge\bfseries, inner sep=5pt] (5) at (8, 0) {5};
\node[font=\Huge\bfseries, inner sep=5pt] (2) at (0, 12) {2};
\node[font=\Huge\bfseries, inner sep=5pt] (1) at (8, 12) {1};
\node[font=\Huge\bfseries, inner sep=5pt] (3) at (2.5, 8.5) {3};
\node[font=\Huge\bfseries, inner sep=5pt] (6) at (5.5, 3.5) {6};

\draw[ultra thick] (2) -- (1) node[midway, above=2mm, text=red, font=\Huge] {$\boldsymbol{\emptyset}$};

\draw[blue double] (4) -- (5);
\draw[blue double] (6) -- (4);
\draw[blue double] (3) -- (6);
\draw[blue double] (3) -- (1);

\draw[green single] (4) -- (2);
\draw[green single] (5) -- (1);
\draw[green single] (3) -- (2);
\draw[green single] (6) -- (5);

\draw[arch] (2) + (270:1.7) arc (270:360:1.7) node[pos=0.1, anchor=north, frac, xshift=2mm, yshift=-2mm] {$\frac{321}{342}$};
\draw[arch] (2) + (305.5:2.2) arc (305.5:360:2.2) node[midway, anchor=west, frac, xshift=2mm, yshift=-2mm] {$\frac{421}{432}$};

\draw[arch] (1) + (270:1.7) arc (270:180:1.7) node[pos=0.3, anchor=north east, frac, xshift=-2mm, yshift=-2mm] {$\frac{312}{351}$};
\draw[arch] (1) + (212.5:2.2) arc (212.5:180:2.2) node[midway, anchor=north, frac, xshift=-2mm, yshift=-2mm] {$\frac{512}{531}$};

\draw[arch] (3) + (-59:1.1) arc (-59:-234.5:1.1) node[midway, anchor=east, frac, xshift=-3mm] {$\frac{132}{163}$};
\draw[arch] (3) + (32.5:1.5) arc (32.5:125.5:1.5) node[pos=0.1, anchor=south, frac, yshift=2mm] {$\frac{-632}{613}$};

\draw[arch] (6) + (121:1.1) arc (121:-54.5:1.1) node[pos=0.3, anchor=west, frac, xshift=3mm] {$\frac{-465}{436}$};
\draw[arch] (6) + (212.5:1.5) arc (212.5:305.5:1.5) node[pos=0.25, anchor=north, frac, yshift=-2mm] {$\frac{-365}{346}$};

\draw[arch] (4) + (0:1.6) arc (0:90:1.6) node[pos=0.075, anchor=south west, frac, xshift=2mm, yshift=2mm] {$\frac{642}{654}$};
\draw[arch] (4) + (32.5:2.3) arc (32.5:90:2.3) node[midway, anchor=south, frac, xshift=2mm, yshift=2mm] {$\frac{542}{564}$};

\draw[arch] (5) + (180:1.8) arc (180:90:1.8) node[pos=0.2, anchor=east, frac, xshift=-2mm, yshift=2mm] {$\frac{-651}{645}$};
\draw[arch] (5) + (125.5:2.3) arc (125.5:90:2.3) node[midway, anchor=south, frac, xshift=-2mm, yshift=2mm] {$\frac{451}{465}$};

\end{tikzpicture}}
\caption{Both $\Gamma$ and $\mathcal{L}$ are illustrated for Example \ref{example:planar-envelope}, with weights written as fractions and square brackets omitted.}
\label{fig:planar-envelope}
\end{figure}

With only one sink, a path system is just a path, and to check if $G$ is $d$-balanced we need to check $\{\{ wt(\mathcal{P}) \}\}$ for each source. We start with $\mu = (4,6)_{4,6}$. There are two paths from $\mu$ to $\nu = (1,2)_\emptyset$, namely $(4,6)\to(5,6)\to(1,5)\to(1,2)$ and $(4,6)\to(2,4)\to(1,2)$. Reading weights along paths in Figure \ref{fig:planar-envelope} we find $\sum frac(\mathcal{P}) = \frac{542}{564} \frac{321}{342} - \frac{365}{346} \frac{451}{465} \frac{312}{351}$ and hence
\begin{align*}
    \{\{ wt(\mathcal{P}) \}\} &= \{\{ \,\, \color{red} \begin{ytableau}
        3&4&3&5&3\\4&6&5&4&2\\6&5&1&2&1
    \end{ytableau}\color{black}, \,\, \color{blue} \begin{ytableau}
        5&3&3&4&3\\6&4&6&5&1\\4&2&5&1&2
    \end{ytableau} \,\, \color{black} \}\}\\
    \{\{ wt(\mathcal{P}) \}\}_{\text{reduced}} &= \{\{ \,\, \color{blue} \begin{ytableau}
        1&2&3\\3&4&4\\5&5&6
    \end{ytableau}\color{black}, \,\, \color{red}\begin{ytableau}
        1&2&3\\4&3&5\\5&4&6
    \end{ytableau} \color{black} \,\, \}\},
\end{align*}
where in the second line we have permuted boxes to strictly increase down columns, swapped colors accordingly, and removed common columns. The first tableau is already semistandard, since its rows are weakly increasing, so we leave it alone. The second tableau has violations $4>3$ and $5>4$. We apply exchange rules to $4>3$ by swapping everything below $4$, namely $4,5$, with every choice of two boxes in the next column, namely $2,3$ and $2,4$ and $3,4$. Since the first exchange yields a column $[454]$ we only get two new tableaux rather than three, yielding
\begin{align*}
    \{\{ \,\, \color{blue} \begin{ytableau}
        1&2&3\\3&4&4\\5&5&6
    \end{ytableau}\color{black},  \color{red}\begin{ytableau}
        1&4&3\\2&3&5\\4&5&6
    \end{ytableau} \color{black}, \color{red}\begin{ytableau}
        1&2&3\\3&4&5\\4&5&6
    \end{ytableau} \color{black} \,\,  \}\} \\
    =\{\{ \,\, \color{blue} \begin{ytableau}
        1&2&3\\3&4&4\\5&5&6
    \end{ytableau}\color{black},  \color{blue}\begin{ytableau}
        1&3&3\\2&4&5\\4&5&6
    \end{ytableau} \color{black}, \color{red}\begin{ytableau}
        1&2&3\\3&4&5\\4&5&6
    \end{ytableau} \color{black} \,\, \}\},
\end{align*}
where in the second line we have permuted boxes to strictly increase down columns, and swapped colors accordingly. Every tableau is semistandard, so we are done with exchange rules. Additionally, we are done with this example, because already this balancing condition has a semistandard tableau that does not appear the same number of times red as blue. Thus, $G$ is not $2$-balanced, and Theorem \ref{thm:main-Maxwell-locally-flexible} implies it is generically locally rigid in the plane.

We now point out two useful shortcuts. 
\begin{enumerate}
    \item First, if we don't have colored pencils, we can work with $\pm$ rather than red and blue. Also, we can immediately cancel (and stop needlessly rewriting) semistandard tableaux that appear with opposite signs (or colors) while applying exchange rules. In this equivalent setup, every semistandard tableau will appear in equal amounts red and blue exactly when every semistandard tableau cancels. We say the balancing condition \textit{straightens to zero}, since exchange rules are often called \textit{Young's straightening law}. This will be formally justified in Section \ref{sec:proofs}.
    \item Second, in checking $\{\{ wt(\mathcal{P}) \}\}$, we can instead sum over paths the products of ratios of weights along the path, denoted $\sum frac(\mathcal{P})$, which in this example is $\frac{542}{564} \frac{321}{342} - \frac{365}{346} \frac{451}{465} \frac{312}{351}$. We then clear denominators and remove common factors before rewriting as tableaux, where it is easier to apply exchange rules.
\end{enumerate}
\end{example}

\begin{figure}[!ht]
\resizebox{0.55\textwidth}{!}{%
\begin{tikzpicture}[
    scale=1.0, 
    blue double/.style={
        line width=2.5pt, blue, {Stealth[length=3mm, width=6mm]}-{Stealth[length=3mm, width=6mm]}
    },
    green single/.style={
        line width=2.5pt, green!70!black, -{Stealth[length=3mm, width=6mm]}
    },
    arch/.style={
        very thick, gray!90!black, {Bar[width=3mm, line width=1.5pt]}-{Stealth[length=4mm, width=3mm]}
    },
    frac/.style={
        text=gray!90!black, font=\Large, fill=white, inner sep=2pt
    }
]

\node[font=\Huge\bfseries, inner sep=5pt, fill=white] (3) at (0, 0) {3};
\node[font=\Huge\bfseries, inner sep=5pt, fill=white] (4) at (10, 0) {4};
\node[font=\Huge\bfseries, inner sep=5pt, fill=white] (6) at (5, 2) {6};
\node[font=\Huge\bfseries, inner sep=5pt, fill=white] (2) at (3.5, 5) {2};
\node[font=\Huge\bfseries, inner sep=5pt, fill=white] (5) at (6.5, 5) {5};
\node[font=\Huge\bfseries, inner sep=5pt, fill=white] (1) at (5, 11) {1};

\draw[thick, gray!90!black] (2) -- (1) node[midway, left=0.5mm, text=red, font=\large] {$\boldsymbol{\emptyset}$};

\draw[blue double] (3) -- (1);
\draw[blue double] (2) -- (5);
\draw[blue double] (3) -- (4);
\draw[blue double] (6) -- (3);
\draw[blue double] (4) -- (5);
\draw[blue double] (4) -- (6);
\draw[blue double] (5) -- (6);

\draw[green single] (4) -- (1);
\draw[green single] (5) -- (1);
\draw[green single] (3) -- (2);
\draw[green single] (6) -- (2);

\draw[arch] (1) + (284:2.5) arc (284:256:2.5) node[midway, anchor=north, frac, yshift=-2mm] {$\frac{3412}{3451}$};

\draw[arch] (5) + (243:0.9) arc (243:104:0.9) node[midway, anchor=south east, frac, xshift=-2mm, yshift=2mm] {$\frac{2451}{2465}$};

\draw[arch] (6) + (63:1.3) arc (63:117:1.3) node[midway, anchor=south, frac, yshift=2mm] {$-\frac{3462}{3456}$};

\draw[arch] (2) + (296:1.8) arc (296:76:1.8) node[midway, anchor=east, frac, xshift=-2mm] {$\frac{3521}{3562}$};

\end{tikzpicture}}
\caption{Only weights for paths from the source $(5,6)$ are illustrated for Example \ref{example:octahedron}.}
\label{fig:octahedron}
\end{figure}

\begin{example}\label{example:octahedron}
We illustrate the shortcuts above. Let $G$ be the octahedron graph. For $d=3$, let $\Gamma$ have one sink $(1,2)_\emptyset$, seven sources $(1,3)_{1,3}$, $(2,5)_{2,5}$, $(3,4)_{3,4}$, $(3,6)_{3,6}$, $(4,5)_{4,5}$, $(4,6)_{4,6}$, $(5,6)_{5,6}$, and four streams $(1,4)_1$, $(1,5)_1$, $(2,3)_2$, $(2,6)_2$. Consider the source $\mu = (5,6)_{5,6}$ with $6$ as the negative endpoint. We read Figure \ref{fig:octahedron} and find $\sum frac(\mathcal{P}) = \frac{2451}{2465} \frac{3412}{3451} - \frac{3462}{3456} \frac{3521}{3562}$. Clearing denominators and writing as tableaux gives
\begin{equation*}
    \begin{ytableau}
        2&3&3&3 \\ 4&4&4&5 \\ 5&1&5&6 \\ 1&2&6&2
    \end{ytableau} - 
    \begin{ytableau}
        3&3&2&3 \\ 4&5&4&4 \\ 6&2&6&5 \\ 2&1&5&1
    \end{ytableau}.
\end{equation*}
Adjusting signs to make columns strictly increase and then reordering columns into dictionary order we find
\begin{equation*}
    \begin{ytableau}
        1&1&2&3 \\ 2&2&3&4 \\ 3&4&5&5 \\ 4&5&6&6
    \end{ytableau} - 
    \begin{ytableau}
        1&1&2&2 \\ 2&3&3&4 \\ 3&4&4&5 \\ 5&5&6&6
    \end{ytableau}.
\end{equation*}
Every tableau is semistandard, $G$ is not $3$-balanced, and we conclude the octahedron is generically locally rigid in $\mathbb{R}^3$ by Theorem \ref{thm:main-Maxwell-locally-flexible}. 
\end{example}

\begin{example}\label{example:double-banana}
The exchange rules used in Young's straightening law are consequences of the Pl\"ucker relations, which we demonstrate using this example. Theorem 8.45 of \cite{Vinberg} records them as follows. For any $i_1,\dots,i_{d+2}$ and any $j_1, \dots, j_d$ we have
\begin{equation*}
    \sum_{k=1}^{d+2} (-1)^k
    [i_1 \dots \widehat{i_k} \dots i_{d+2}]
    \cdot
    [i_k \, j_1 \dots j_d] = 0.
\end{equation*}

\begin{figure}[!ht]
\resizebox{0.5\textwidth}{!}{%
\begin{tikzpicture}[
  scale=1.0,
  every node/.style={inner sep=3pt, font=\large},
  >={Straight Barb[scale=1.3]},
  sarrow/.style={
    decoration={markings, mark=at position 0.55 with {\arrow[blue, line width=1pt]{>}}},
    postaction={decorate}
  },
  darrow/.style={
    decoration={markings, 
      mark=at position 0.3 with {\arrow[blue, line width=1pt]{<}},
      mark=at position 0.7 with {\arrow[blue, line width=1pt]{>}}
    },
    postaction={decorate}
  }
]

  \node (v5) at (0, 3.5) {5};
  \node (v4) at (0, -3.5) {4};

  \node (v3) at (-2.8, 1.2) {3};
  \node (v1) at (-1.0, 0) {1};
  \node (v2) at (-2.8, -1.2) {2};

  \node (v6) at (2.8, 1.2) {6};
  \node (v7) at (1.0, 0) {7};
  \node (v8) at (2.8, -1.2) {8};

  \draw[sarrow] (v5) -- (v3);
  \draw[sarrow] (v5) -- (v1);
  \draw[sarrow] (v5) -- (v2);

  \draw[darrow] (v3) -- (v1);
  \draw[sarrow] (v3) -- (v2);
  
  \draw[sarrow] (v4) -- (v3);
  \draw[sarrow] (v4) -- (v1);
  \draw[sarrow] (v4) -- (v2);

  \draw[line width=2.5pt, magenta!80!pink] (v1) -- (v2) 
    node[midway, below right, text=magenta!80!pink, inner sep=1pt] {$\emptyset$};

  \draw[darrow] (v5) -- (v6);
  \draw[darrow] (v5) -- (v7);
  \draw[sarrow] (v8) -- (v5);

  \draw[darrow] (v6) -- (v7);
  \draw[sarrow] (v6) -- (v8);
  \draw[darrow] (v7) -- (v8);

  \draw[darrow] (v6) -- (v4);
  \draw[sarrow] (v7) -- (v4);
  \draw[darrow] (v8) -- (v4);

\end{tikzpicture}}
\caption{This illustrates $\Gamma$ for Example \ref{example:double-banana}. Weights are omitted.}
\label{fig:double-banana}
\end{figure}

Let $d=3$ and take $G$ as the double-banana, with $\Gamma$ pictured in Figure \ref{fig:double-banana}, without weights. Consider the source $\mu = (4,8)_{4,8}$ with $8$ as the negative endpoint. There are three paths from $\mu$ to $\nu=(1,2)_\emptyset$ using vertex $4$ and three paths using vertex $8$, yielding 
\begin{multline*}
    \frac{6741}{6784}\frac{3512}{3541} + \frac{6743}{6784}\frac{1532}{1543}\frac{4521}{4532} + \frac{6742}{6784}\frac{3521}{3542} \\
    - \frac{6785}{6748}\left(\frac{6751}{6785}\frac{3412}{3451} + \frac{6752}{6785}\frac{3421}{3452} + \frac{6753}{6785}\frac{1432}{1453}\frac{4521}{4532}\right).
\end{multline*}
After clearing denominators and using the Pl\"ucker relations 
\begin{align*}
    [1467][2345] &= [1345][2467] - [1245][3467] + [1234][4567] \hspace{0.2cm} \text{ and }\\
    [1567][2345] &= [1345][2567] - [1245][3567] + [1235][4567],
\end{align*}
everything cancels. A direct calculation, omitted here, shows the remaining six balancing conditions similarly straighten to zero, implying $G$ is $3$-balanced, and hence generically flexible in $\mathbb{R}^3$.

It is interesting to consider the rigidity matrix with entries $0$ or $\pm (p_{ik} - p_{jk})$ for this example. In a tie-down, we delete the three columns for vertex $1$, two columns for vertex $2$, and one column for vertex $3$, and compute the $18 \times 18$ determinant as a polynomial in the $p_{ik}$. Expanding, $31,104$ of the $18!$ terms are nonzero, but each is a product of $18$ binomials, and we must expand those, yielding
\begin{equation*}
    31,104 \cdot 2^{18} = 8,153,726,976 \quad (\text{over 8 billion!)}
\end{equation*}
signed monomials of degree $18$ in the $p_{ik}$, every one of which is cancelled by others, giving zero in the end. Nothing in the expansion indicates \textit{why} it vanishes. Other tie-downs are worse. In comparison, our balancing condition above vanishes after two Pl\"ucker relations, which also have geometric meaning, and completely avoids any symbolic expansion in the $p_{ik}$.

\end{example}

\begin{example}\label{example:planar-envelope-two-sinks}
We now illustrate path systems with more than one sink, and graphs with $|E| > d|V| - \binom{d+1}{2}$.
The graph in Example \ref{example:planar-envelope} was not $2$-balanced, but if we add the edge $(1,6)$ it becomes $2$-balanced, detecting the self-stress by Theorem \ref{thm:balanced-equivalent-to-selfstress}. Choose $\Gamma$ illustrated in Figure \ref{fig:planar-envelope-two-sinks}, now with two sinks $\nu_1=(1,2)$ and $\nu_2 = (1,6)$.

\begin{figure}[!ht]
\resizebox{0.45\textwidth}{!}{%
\begin{tikzpicture}[
    scale=1.2,
    blue double/.style={
        line width=3.5pt, blue, {Stealth[length=7mm, width=6mm]}-{Stealth[length=7mm, width=6mm]}
    },
    green single/.style={
        line width=3.5pt, green!70!black, -{Stealth[length=7mm, width=6mm]}
    },
    arch/.style={
        very thick, gray!90!black, {Bar[width=3mm, line width=1.5pt]}-{Stealth[length=4mm, width=3mm]}
    },
    frac/.style={
        text=gray!90!black, font=\Large, fill=white, inner sep=2pt
    }
]

\node[font=\Huge\bfseries, inner sep=5pt, fill=white] (4) at (0, 0) {4};
\node[font=\Huge\bfseries, inner sep=5pt, fill=white] (5) at (8, 0) {5};
\node[font=\Huge\bfseries, inner sep=5pt, fill=white] (2) at (0, 12) {2};
\node[font=\Huge\bfseries, inner sep=5pt, fill=white] (1) at (8, 12) {1};
\node[font=\Huge\bfseries, inner sep=5pt, fill=white] (3) at (2.5, 8.5) {3};
\node[font=\Huge\bfseries, inner sep=5pt, fill=white] (6) at (5.5, 3.5) {6};

\draw[ultra thick] (2) -- (1) node[midway, above=2mm, text=red, font=\LARGE] {$\boldsymbol{\emptyset}$};
\draw[ultra thick] (6) -- (1) node[midway, right=2mm, text=red, font=\LARGE] {$\boldsymbol{\emptyset}$};

\draw[blue double] (4) -- (5);
\draw[blue double] (6) -- (4);
\draw[blue double] (3) -- (6);
\draw[blue double] (3) -- (1);

\draw[green single] (4) -- (2);
\draw[green single] (5) -- (1);
\draw[green single] (3) -- (2);
\draw[green single] (6) -- (5);

\draw[arch] (2) + (270:1.7) arc (270:360:1.7) node[pos=0.1, anchor=north, frac, xshift=2mm, yshift=-2mm] {$\frac{321}{342}$};
\draw[arch] (2) + (305.5:2.2) arc (305.5:360:2.2) node[midway, anchor=west, frac, xshift=2mm, yshift=-2mm] {$\frac{421}{432}$};

\draw[arch] (4) + (0:1.6) arc (0:90:1.6) node[pos=0.075, anchor=south west, frac, xshift=2mm, yshift=2mm] {$\frac{642}{654}$};
\draw[arch] (4) + (32.5:2.3) arc (32.5:90:2.3) node[midway, anchor=south, frac, xshift=2mm, yshift=2mm] {$\frac{542}{564}$};

\draw[arch] (5) + (180:1.8) arc (180:90:1.8) node[pos=0.2, anchor=east, frac, xshift=-2mm, yshift=2mm] {$-\frac{651}{645}$};
\draw[arch] (5) + (125.5:2.3) arc (125.5:90:2.3) node[midway, anchor=south, frac, xshift=-2mm, yshift=2mm] {$\frac{451}{465}$};

\draw[arch] (3) + (-59:1.1) arc (-59:-234.5:1.1) node[midway, anchor=east, frac, xshift=-3mm] {$\frac{132}{163}$};
\draw[arch] (3) + (32.5:1.5) arc (32.5:125.5:1.5) node[pos=0.1, anchor=south, frac, yshift=2mm] {$-\frac{632}{613}$};

\draw[arch] (1) + (270:3.2) arc (270:180:3.2) node[pos=0.5, anchor=north, frac] {$\frac{312}{351}$};
\draw[arch] (1) + (270:2.4) arc (270:253.6:2.4) node[pos=0.1, anchor=north, frac] {$\frac{316}{351}$};
\draw[arch] (1) + (212.5:1.8) arc (212.5:253.6:1.8) node[midway, anchor=north east, frac] {$\frac{516}{531}$};
\draw[arch] (1) + (212.5:2.4) arc (212.5:180:2.4) node[midway, anchor=west, frac] {$\frac{512}{531}$};

\draw[arch] (6) + (121:1.4) arc (121:73.6:1.4) node[midway, anchor=south, frac, yshift=2mm] {$-\frac{461}{436}$};
\draw[arch] (6) + (121:2.2) arc (121:-54.5:2.2) node[pos=0.4, anchor=west, frac, xshift=2mm] {$-\frac{465}{436}$};
\draw[arch] (6) + (-147.5:1.4) arc (-147.5:-54.5:1.4) node[midway, anchor=north, frac, yshift=-2mm] {$-\frac{365}{346}$};
\draw[arch] (6) + (-147.5:0.8) arc (-147.5:73.6:0.8) node[pos=0.7, anchor=west, frac, xshift=1mm] {$-\frac{361}{346}$};

\end{tikzpicture}}
\caption{All weights are included as fractions, illustrating both $\Gamma$ and $\mathcal{L}$ for Example \ref{example:planar-envelope-two-sinks}.}
\label{fig:planar-envelope-two-sinks}
\end{figure}

We illustrate the balancing condition arising from choosing sources $\mu_1 = (1,3)$ and $\mu_2 = (3,6)$. The four disjoint path systems connecting $\mu_1 \to \nu_1, \mu_2 \to \nu_2$ get a positive sign while the two disjoint path systems connecting $\mu_1 \to \nu_2, \mu_2 \to \nu_1$ get a negative sign. Paths from $\mu_1 \to \nu_1$ have weights $a_1=\frac{512}{531}$ and $a_2 = -\frac{632}{613}\frac{421}{432}$. Paths from $\mu_2 \to \nu_2$ have weights $\alpha_1 = -\frac{461}{436}$ and $\alpha_2 = -\frac{465}{436}\frac{451}{465}\frac{316}{351}$. The path from $\mu_1 \to \nu_2$ has weight $b_1 = \frac{516}{531}$. Paths from $\mu_2 \to \nu_1$ have weights $\beta_1 = \frac{132}{163}\frac{421}{432}$ and $\beta_2 = -\frac{465}{436}\frac{451}{465}\frac{312}{351}$. Then
\begin{align*}
    \sum frac(\mathcal{P}) &= a_1 \alpha_1 + a_1 \alpha_2 + a_2 \alpha_1 + a_2 \alpha_2 - b_1 \beta_1 - b_1 \beta_2 \\
    &= (a_1 + a_2)(\alpha_1 + \alpha_2) - b_1(\beta_1 + \beta_2).
\end{align*}
After clearing denominators and applying exchange rules, all semistandard tableaux cancel. We omit the computation.

\end{example}

\subsection{A more efficient denominator clearing}\label{subsec:more-efficient-denominator-clearing}

The definition of $wt(\mathcal P)$ in Section~1 was chosen for simplicity. For computation, however, the same balancing condition can
be represented using a substantially smaller common denominator. 

Fix one balancing condition, determined by a choice
\begin{equation*}
    S=\{ \mu_1, \dots, \mu_\ell \}
\end{equation*}
of $\ell$ sources, and let $\mathcal{D}$ denote the corresponding set of disjoint path systems. Note that all arrows leaving a stream node $\xi$ have identical down-weights, which we denote by $d_\xi$, and a disjoint path system contains each stream node at most once.  A source $\mu=(a,b)_{a,b}$ has two possible down-weights, denoted $d_{\mu,a}$ and $d_{\mu,b}$, and each path system uses exactly one of them. Define
\begin{equation*}
    d_S^{\text{eff}} = \left(\prod_{\xi \, \in \, \text{streams}} d_\xi\right) \left( \prod_{\mu=(a,b)\in S} d_{\mu,a}d_{\mu,b} \right).
\end{equation*}
Then $d_S^{\text{eff}}$ is a common denominator for
$\operatorname{frac}(\mathcal P)$ over all
$\mathcal P\in \mathcal{D}$ and the polynomial
\begin{equation*}
    B_S^{\text{eff}} = d_S^{\text{eff}} \sum_{\mathcal{P} \in \mathcal{D}} frac(\mathcal{P})
\end{equation*}
may be used in place of the balancing polynomial $B_S = \sum wt(\mathcal{P})$ from Section \ref{sec:introduction} when carrying out a straightening computation on tableaux. 

There is also a useful bound on its size.  Let $k$, $r$, and $\ell$ denote the numbers of sources, streams, and sinks in $\mathcal{L}$.  Whenever a balancing condition must be checked we have $k \geq \ell$, and therefore
\begin{equation*}
    r+2\ell
    \leq k+r+\ell
    \leq |E|.
\end{equation*}
The product $d_S^{\text{eff}}$ has $r+2\ell$ bracket factors. If a path system uses $q$ arrows, then $q$ of the $r+2\ell$ factors in $d_S^{\text{eff}}$ will cancel $q$ denominator factors in $frac(\mathcal{P})$ and thus every tableau occurring in $B_S^{\text{eff}}$ will have exactly $r+2\ell - q + q = r+2\ell$ columns, and hence at most $|E|$ columns, even before removing any additional common factors.

\subsection{Geometric interpretation} All the tableaux computations above were independent of any placement. However, we can evaluate them geometrically to identify special affinely independent placements and their geometric meaning. We omit the $|_p$ notation and illustrate with examples.

\begin{example}\label{example:three-concurrent-lines}
In Example \ref{example:planar-envelope} the balancing condition for source $(4,6)_{4,6}$ was represented before straightening by $[123][456]([135][245][346] - [145][234][356])$. The Pl\"ucker relations $[135][245]=[125][345]+[145][235]$ and $[235][346]-[234][356]=[236][345]$ transform it into
\begin{equation*}
    [123][345][456] \bigg([125][346]+[145][236]\bigg).
\end{equation*}
Since $[123][345][456]$ is nonzero for any affinely independent placement, the vanishing is controlled by $[125][346]+[145][236]$, whose projective geometry we now explain. Interpret the vectors $P_i=(p_i,1)^T$ as homogeneous coordinates for points in the projective plane.  Let $X=[145]P_2-[125]P_4$. Since $X$ is a linear combination of $P_2$ and $P_4$, it lies on the projective line $24$.  On the other hand,
\begin{align*}
    [15X]
    &= [145][152]-[125][154]\\
    &= -[145][125]+[125][145] = 0.
\end{align*}
Hence $X$ also lies on the line $15$, so $X$ represents the intersection of the lines $15$ and $24$. When does $X$ also lie on the line $36$?  We compute
\begin{align*}
    [X36]
    &= [145][236]-[125][436]\\
    &= [145][236]+[125][346].
\end{align*}
Thus the balancing
condition vanishes precisely when the three lines
\begin{equation*}
    15,\qquad 24,\qquad 36
\end{equation*}
are concurrent. Checking other balancing conditions will similarly produce the same essential geometry, matching the pure conditions of \cite{white-whiteley}.
\end{example}

\begin{example}\label{example:octahedron-quadric}
Recall the octahedron graph in Example \ref{example:octahedron} with 
\begin{equation*}
    \sum frac(\mathcal{P}) = \frac{2451}{2465} \frac{3412}{3451} - \frac{3462}{3456} \frac{3521}{3562}.
\end{equation*} 
For an affinely independent placement, we can find $C \in GL_4$ such that $C[P_2 P_3 P_4 P_5]$ is the identity matrix and set $X=CP_1$, $Y=C P_6$. Applying $C$ to the matrix $M$ we find
\begin{equation*}
    CM = \begin{bmatrix}
        x_1 & 1 & 0 & 0 & 0 & y_1 \\
        x_2 & 0 & 1 & 0 & 0 & y_2 \\
        x_3 & 0 & 0 & 1 & 0 & y_3 \\
        x_4 & 0 & 0 & 0 & 1 & y_4
    \end{bmatrix},
\end{equation*}
The condition $\sum wt(\mathcal{P})|_p = 0$ is therefore equivalent to the condition $\frac{x_2 x_4}{x_1 x_3} = \frac{y_2 y_4}{y_1 y_3}$. We view the constant $k = \frac{y_2 y_4}{y_1 y_3}$ as determined by the position $Y$ of vertex $6$. Therefore, the balancing condition vanishes exactly when vertex $1$ lies on the quadric $x_2 x_4 - kx_1 x_3 = 0$, which we recognize from Blaschke and Liebmann's characterization of infinitesimally flexible octahedra \cite{Blaschke1920, Liebmann1920}.

\end{example}

\begin{example}\label{example:longer-straightening}
Although we have already concluded that $G$ from Example \ref{example:planar-envelope} is generically rigid, to illustrate a longer sequence of exchange rules, we examine the balancing condition for the source $(1,3)_{1,3}$.
Following paths in Figure \ref{fig:planar-envelope}, we find $\sum frac(\mathcal{P}) = \frac{512}{531} - \frac{632}{613} \frac{421}{432}$.
Clearing denominators and reordering brackets gives $-[125][136][234]+[124][135][236]$ which we now rewrite as tableaux. Writing each bracket as a column, we have
\begin{equation*}
    \begin{ytableau}
        1&1&2 \\ 2&3&3 \\ 4&5&6
    \end{ytableau} - 
    \begin{ytableau}
        1&1&2 \\ 2&3&3 \\ 5&6&4
    \end{ytableau}.
\end{equation*}
The first tableau is already semistandard, since the rows are weakly increasing, and hence will remain untouched by exchange rules. The second tableau is not, since $6 > 4$. We apply the exchange rules, and replace the columns $[136][234]$ by the sum of all three ways of exchanging $6$ with an entry of the second column. However, switching $6$ with $3$ is unnecessary since $[133]$ has a repeated index. Thus exchange rules produce
\begin{equation*}
    \begin{ytableau}
        1&1&2 \\ 2&3&3 \\ 4&5&6
    \end{ytableau} - 
    \begin{ytableau}
        1&1&6 \\ 2&3&3 \\ 5&2&4
    \end{ytableau} - 
    \begin{ytableau}
        1&1&2 \\ 2&3&3 \\ 5&4&6
    \end{ytableau}.
\end{equation*}
Adjusting signs to make columns strictly increase, we have
\begin{equation*}
    \begin{ytableau}
        1&1&2 \\ 2&3&3 \\ 4&5&6
    \end{ytableau} + 
    \begin{ytableau}
        1&1&3 \\ 2&2&4 \\ 5&3&6
    \end{ytableau} - 
    \begin{ytableau}
        1&1&2 \\ 2&3&3 \\ 5&4&6
    \end{ytableau}.
\end{equation*}
The $5>3$ in the second tableau is not a violation, since we must swap columns 1 and 2 into dictionary order, giving a semistandard tableau which remains untouched by exchange rules from here onwards. The violation $5>4$ in the third gives two terms after exchange rules, since switching $5$ and $1$ causes a repeat index. We find
\begin{equation*}
    \cancel{\begin{ytableau}
        1&1&2 \\ 2&3&3 \\ 4&5&6
    \end{ytableau}} + 
    \begin{ytableau}
        1&1&3 \\ 2&2&4 \\ 3&5&6
    \end{ytableau} + 
    \begin{ytableau}
        1&1&2 \\ 2&4&3 \\ 3&5&6
    \end{ytableau} - 
    \cancel{\begin{ytableau}
        1&1&2 \\ 2&3&3 \\ 4&5&6
    \end{ytableau}}.
\end{equation*}
Now everything is semistandard except the third tableau, with $4>3$, and the first and fourth tableaux cancel. Applying exchange rules to $4>3$ we now swap everything below $4$, namely the $4,5$ with every choice of two boxes in the next column. Thus we swap $4,5$ with $2,3$, then $2,6$, then $3,6$, maintaining vertical order otherwise.
\begin{equation*}
    \begin{ytableau}
        1&1&3 \\ 2&2&4 \\ 3&5&6
    \end{ytableau} + 
    \begin{ytableau}
        1&1&4 \\ 2&2&5 \\ 3&3&6
    \end{ytableau} +
    \begin{ytableau}
        1&1&4 \\ 2&2&3 \\ 3&6&5
    \end{ytableau} +
    \begin{ytableau}
        1&1&2 \\ 2&3&4 \\ 3&6&5
    \end{ytableau}.
\end{equation*}
Adjusting signs to make columns strictly increase, and swapping columns into dictionary order, we find
\begin{equation*}
    \begin{ytableau}
        1&1&3 \\ 2&2&4 \\ 3&5&6
    \end{ytableau} + 
    \begin{ytableau}
        1&1&4 \\ 2&2&5 \\ 3&3&6
    \end{ytableau} -
    \begin{ytableau}
        1&1&3 \\ 2&2&4 \\ 3&6&5
    \end{ytableau} +
    \begin{ytableau}
        1&1&2 \\ 2&3&4 \\ 3&6&5
    \end{ytableau}.
\end{equation*}
Now the third and fourth tableaux have violations $6>5$ both. Notice that on the third tableau, swapping $6$ and $3$ will cancel the second term above, swapping $6$ and $5$ will cancel the first term above, while swapping $6$ and $4$ will remain. Executing this gives
\begin{equation*}
    \begin{ytableau}
        1&1&3 \\ 2&2&5 \\ 3&4&6
    \end{ytableau} +
    \begin{ytableau}
        1&1&2 \\ 2&3&4 \\ 3&6&5
    \end{ytableau},
\end{equation*}
with $6>5$ remaining in the second term. Applying exchange rules gives
\begin{equation*}
    \begin{ytableau}
        1&1&3 \\ 2&2&5 \\ 3&4&6
    \end{ytableau} -
    \begin{ytableau}
        1&1&4 \\ 2&2&5 \\ 3&3&6
    \end{ytableau} -
    \begin{ytableau}
        1&1&2 \\ 2&3&5 \\ 3&4&6
    \end{ytableau} +
    \begin{ytableau}
        1&1&2 \\ 2&3&4 \\ 3&5&6
    \end{ytableau}.
\end{equation*}
All four remaining tableaux are semistandard, hence the balancing condition for source $(1,3)_{1,3}$ does not straighten to zero.
\end{example}

\section{Proofs}\label{sec:proofs}

In this section we prove our main results. Given a placement $p:V \to \mathbb{R}^d$, let $wA = 0$ represent the system of equations coming from all vertex equations $\sum_{j: (i,j) \in E} w_{ij} e_{ij} = 0$ for each $i \in V$, collectively called the \textbf{self-stress equations}. We call a placement \textbf{affinely independent} if any set of at most $d+1$ vertices are affinely independent. 
Note that by construction, every source or stream has at least one outgoing arrow in $\mathcal{L}$.  Since $\mathcal{L}$ is finite and acyclic, every maximal directed path beginning at a source or stream terminates at a sink.  Hence, whenever there are non-desert edges, there is at least one sink. As in Section \ref{sec:examples}, let $frac(\mathfrak{a}) = \mathfrak{u}_\mathfrak{a} / \mathfrak{d}_\mathfrak{a}$.
We now prove the crucial but elementary lemma. 

\begin{lemma}\label{lemma:stream-formula-new}
    Let $\Gamma$ be a $d$-orientation on $G$ and let $p:V \to \mathbb{R}^d$ be affinely independent. If $wA=0$ and $(i,j)$ is a stream, then
    \begin{equation}\label{eqn:stream-formula-new}
        w_{ij} = \sum_{\nu \, \in \, \text{sinks}} \left( \sum_{\substack{\text{paths } \mathcal{P} \\ ij \to \nu}} \,\,\, \prod_{\mathfrak{a} \in \mathcal{P}} frac(\mathfrak{a})|_p \right) w_\nu,
    \end{equation}
    where the inner sum is over all paths from $(i,j)$ to $\nu$ in $\mathcal{L}$. 
\end{lemma}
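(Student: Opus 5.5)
The plan is to derive a local ``Cramer's rule'' identity at each vertex, and then propagate it through the acyclic graph $\mathcal{L}$ by induction. Three steps: show desert weights vanish, solve each vertex equation for incoming weights in terms of outgoing ones, then induct on distance to the sinks.

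\textbf{Step 1: desert weights vanish.} I would first show that $wA=0$ forces $w_\delta=0$ for every desert edge $\delta$. Order the desert edges as they were assigned in the peeling process, and induct on this order. When edges at a vertex $a$ are declared deserts, $a$ has at most $d$ edges that are not yet deserts. By induction, all earlier deserts already carry weight zero, so the vertex equation at $a$ reduces to $\sum w_{aj}e_{aj}=0$ over at most $d$ neighbours $j$. By affine independence of $p$, the points $p_a$ and the $p_j$ (at most $d+1$ points) are affinely independent. Hence the vectors $e_{aj}$ are linearly independent, and all these $w_{aj}$ vanish. Consequently, at every vertex $a$ of the non-desert graph, the vertex equation involves only the $d$ incoming edges and the outgoing edges at $a$.

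\textbf{Step 2: the local formula.} Fix a vertex $a$ with incoming edges to $b, j_1,\dots,j_{d-1}$, where $j_1<\dots<j_{d-1}$. Again by affine independence, the vectors $e_{ab}, e_{aj_1},\dots,e_{aj_{d-1}}$ form a basis of $\mathbb{R}^d$. Solve the vertex equation $\sum_{\text{in}} w_{ax}e_{ax} = -\sum_{c \text{ out}} w_{ac}e_{ac}$ for $w_{ab}$ by Cramer's rule, placing $e_{ab}$ in the last slot:
\[
w_{ab} = -\sum_{c} w_{ac}\,\frac{\det(e_{aj_1},\dots,e_{aj_{d-1}},e_{ac})}{\det(e_{aj_1},\dots,e_{aj_{d-1}},e_{ab})}.
\]
Next, use the standard identity that subtracting the column $(p_a,1)$ from the other columns of the $(d+1)\times(d+1)$ matrix gives $\det(e_{ax_1},\dots,e_{ax_d})=\pm[a\,x_1\cdots x_d]$. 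The sign depends only on $d$, so it cancels in the ratio. Moving $a$ to the same position in numerator and denominator also contributes equal signs. The ratio therefore becomes $[j_1\cdots j_{d-1}ac]/[j_1\cdots j_{d-1}ab]$. Swapping the last two entries of the denominator absorbs the minus sign, giving
\[
w_{ab}=\sum_c \frac{[j_1\cdots j_{d-1}ac]}{[j_1\cdots j_{d-1}ba]}\,w_{ac}=\sum_{\mathfrak a:(a,b)\to(a,c)} frac(\mathfrak a)|_p\, w_{ac}.
\]
This is exactly the up-weight/down-weight ratio of the arrow $(a,b)\to(a,c)$ using $a$. Denominators are nonzero by affine independence.

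\textbf{Step 3: induction along $\mathcal{L}$.} Now let $(i,j)$ be a stream incoming at $i$. Its out-arrows in $\mathcal{L}$ all use $i$ and go to the edges outgoing at $i$. Each such edge is either a sink or a stream incoming at its other endpoint. It cannot be a source (sources are incoming at both ends), and by Step 1 it is not a desert. Since $\mathcal{L}$ is acyclic, I would induct on the length of the longest path from a stream to a sink. For sink successors, the formula holds trivially, with the single empty path contributing $w_\nu$. For stream successors, substitute the inductive formula into the local identity from Step 2. Concatenating arrows with paths reproduces the sum over all paths from $(i,j)$ to each sink, with products of $frac$, which proves \eqref{eqn:stream-formula-new}.

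The main obstacle I expect is the sign bookkeeping in Step 2. I must confirm that the edge-vector determinant and the bracket $[\cdots]$ of the columns $(p_x,1)$ agree up to a sign that is uniform in the last entry. I also need to check that the Cramer minus sign turns $[\cdots ab]$ into $[\cdots ba]$, matching the paper's convention for $\mathfrak{d}_\mathfrak{a}$. The negative-endpoint convention for sources plays no role here, since no arrow leaving a stream starts at a source.
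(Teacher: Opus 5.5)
Your proposal is correct and follows essentially the same route as the paper's proof. Desert weights vanish by repeated use of affine independence, Cramer's rule at each vertex $a$ becomes the bracket ratio $[j_1\cdots j_{d-1}ac]/[j_1\cdots j_{d-1}ba]$ with the minus sign absorbed by a transposition, and the formula follows by substitution along $\mathcal{L}$; your induction on the longest path to a sink is just a more formal rendering of the paper's ``repeated substitution'' step.
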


\begin{proof}
    First note that $w_{ij} = 0$ whenever $(i,j)$ is a desert edge, by affine independence of $p$ applied repeatedly as we label desert edges.
    Next, in the non-desert edge graph, examine an incoming edge $(a,b)_a$ at an arbitrary vertex $a$, with some outgoing edges $(a,c_1), (a,c_2),\dots,(a,c_m)$, each a stream or sink, which we leave unspecified for now. Note that vertex $a$ must have $d$ incoming edges, whose other endpoints are some $j_1,j_2,\dots,j_{d-1}$ and $b$. The vertex equation corresponding to $a$ in $wA=0$ is
    \begin{equation*}
        \sum_{k=1}^{d-1} w_{aj_k} e_{aj_k} + w_{ab}e_{ab} + \sum_{k=1}^m w_{ac_k} e_{ac_k} = 0.
    \end{equation*}
    Move the last term to the right-side, and apply Cramer's rule to solve for $w_{ab}$, which is valid by affine independence, no matter the choice of $d$ incoming edges. We obtain
    \begin{align*}
        w_{ab} &= \frac{  \det(e_{aj_1}, \dots, e_{aj_{d-1}}, \sum_k(-w_{ac_k})e_{ac_k} )  }{   \det(e_{aj_1}, \dots, e_{aj_{d-1}}, e_{ab})   } \\
        &= \sum_{k=1}^m \frac{  \det(e_{aj_1}, \dots, e_{aj_{d-1}}, e_{ac_k} )  }{   \det(e_{aj_1}, \dots, e_{aj_{d-1}}, e_{ab})   } (-w_{ac_k}).
    \end{align*}
    By rewriting each $e_{a x} = p_x - p_a$, appending a column $(p_a, 1)^T$ on the right while adding a zero to every other column in a new $(d+1)$st coordinate, and then adding the column $(p_a, 1)^T$ to the first $d$ columns, we see that the $d \times d$ determinants in the formula for $w_{ab}$ above are identical to the $(d+1) \times (d+1)$ determinants corresponding to tableaux given by
    \begin{equation*}
        w_{ab} = \sum_{k=1}^m \frac{  [j_1 j_2 \dots j_{d-1} c_k a]|_p  }{   [j_1 j_2 \dots j_{d-1} b a]|_p   } (-w_{ac_k}).
    \end{equation*}
    To eliminate the minus sign, we can swap the last two entries of the numerator, yielding the formula
    \begin{equation}\label{eqn:cramers-rule-new}
        w_{ab} = \sum_{k=1}^m \frac{  [j_1 j_2 \dots j_{d-1} a c_k]|_p  }{   [j_1 j_2 \dots j_{d-1} b a]|_p   } \,\, w_{ac_k}.
    \end{equation}
    Since every path in $\mathcal{L}$ reaching $(a,b)_a$ extends to a path using the arrow $(a,b)_a \to (a,c_k)$ for $k=1,\dots,m$, the formula (\ref{eqn:stream-formula-new}) follows upon repeated substitution of variables, using (\ref{eqn:cramers-rule-new}) at every node along paths starting at $(i,j)$ and ending at a sink.
\end{proof}

\begin{lemma}\label{lemma:source-constraint-equations-new}
    Let $\Gamma$ be a $d$-orientation on $G$ and let $p:V \to \mathbb{R}^d$ be affinely independent. If $wA = 0$, then for each source $\mu$ we must have
    \begin{equation}\label{eqn:source-constraint-equations-new}
        \sum_{\nu \, \in \, \text{sinks}} \left( \sum_{\substack{\text{paths } \mathcal{P} \\ \mu \to \nu}} \,\,\, \prod_{\mathfrak{a} \in \mathcal{P}} frac(\mathfrak{a})|_p \right) w_\nu = 0,
    \end{equation}
    where the inner sum is over all paths in $\mathcal{L}$ from $\mu$ to $\nu$.
\end{lemma}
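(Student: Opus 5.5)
The plan is to write down the vertex equations at the two endpoints of the source $\mu=(a,b)_{a,b}$ and eliminate $w_{ab}$ between them. Say $b$ is the larger endpoint, so $b$ is the negative endpoint. Since $\mu$ is incoming at both $a$ and $b$, the edge $(a,b)$ is one of the $d$ incoming edges at $a$ and also one of the $d$ incoming edges at $b$. So the Cramer's rule computation in the proof of Lemma \ref{lemma:stream-formula-new} applies verbatim at either endpoint; it is valid at any affinely independent $p$ regardless of the choice of incoming edges.

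\textbf{Step 1.} At vertex $a$, let $j_1<\dots<j_{d-1}$ be the other endpoints of the incoming edges and $c_1,\dots,c_m$ the endpoints of the outgoing edges. Formula (\ref{eqn:cramers-rule-new}) gives
\begin{equation*}
w_{ab}=\sum_k \frac{[j_1\dots j_{d-1}\,a\,c_k]|_p}{[j_1\dots j_{d-1}\,b\,a]|_p}\,w_{ac_k}=\sum_k frac(\mathfrak a_k)|_p\, w_{ac_k},
\end{equation*}
where $\mathfrak a_k$ is the arrow $\mu\to(a,c_k)$ leaving the source from its positive endpoint. Desert edges never appear here, since $w=0$ on deserts and the non-desert graph was the one oriented.

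\textbf{Step 2.} At vertex $b$, with incoming partners $j'_1,\dots,j'_{d-1},a$ and outgoing partners $c'_1,\dots,c'_{m'}$, the same computation gives
\begin{equation*}
w_{ab}=\sum_k \frac{[j'_1\dots j'_{d-1}\,b\,c'_k]|_p}{[j'_1\dots j'_{d-1}\,a\,b]|_p}\,w_{bc'_k}.
\end{equation*}
Because the up-weight of arrows leaving the negative endpoint carries an extra minus sign, this reads $w_{ab}=-\sum_k frac(\mathfrak b_k)|_p\,w_{bc'_k}$, where $\mathfrak b_k$ is the arrow $\mu\to(b,c'_k)$.

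\textbf{Step 3.} Adding the two expressions for $w_{ab}$ with the appropriate signs gives
\begin{equation*}
\sum_k frac(\mathfrak a_k)|_p\,w_{ac_k}+\sum_k frac(\mathfrak b_k)|_p\,w_{bc'_k}=w_{ab}-w_{ab}=0.
\end{equation*}

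\textbf{Step 4.} Each target edge $(a,c_k)$ or $(b,c'_k)$ is outgoing at that endpoint, so it is a stream or a sink. If it is a sink $\nu$, leave $w_\nu$ as is; this term is the length-one path $\mu\to\nu$. If it is a stream, substitute the expression from Lemma \ref{lemma:stream-formula-new}. Every path from $\mu$ to a sink factors uniquely as a first arrow out of $\mu$ followed by a path from the next node, and path weights are multiplicative. So after regrouping by sink, the left side becomes exactly (\ref{eqn:source-constraint-equations-new}).

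The main obstacle is sign bookkeeping. One must check that the orientation of the brackets in the down-weight $[j_1\dots j_{d-1}\,b\,a]$ versus $[j'_1\dots j'_{d-1}\,a\,b]$, together with the extra sign on negative-endpoint up-weights, produces exactly opposite signs for the two expressions of $w_{ab}$ rather than equal ones. I would verify this directly from the derivation of (\ref{eqn:cramers-rule-new}) with the roles of $a$ and $b$ exchanged. I would then cross-check against Example \ref{example:planar-envelope}, where the source $(4,6)$ has sign $-$ on the arrows from $6$.
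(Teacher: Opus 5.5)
Your proposal is correct and follows the paper's proof. Both solve for $w_{ab}$ by Cramer's rule at each endpoint of the source, equate the two expressions, and move the negative-endpoint sum across so that its sign matches the definition of $\mathfrak{u}_\mathfrak{a}$; the only difference is that you eliminate $w_{ab}$ after one step and then invoke Lemma \ref{lemma:stream-formula-new} for the downstream streams, whereas the paper reruns the full path expansion from each endpoint before equating. The sign check you flag as the main obstacle is already settled by your Step 2: (\ref{eqn:cramers-rule-new}) was derived at an arbitrary vertex for an arbitrary incoming edge, so exchanging $a$ and $b$ reproduces exactly the weights of the arrows using $b$, apart from the explicit minus sign.
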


\begin{proof}
    Since $\mu=(i,j)_{i,j}$ is oriented into both $i$ and $j$, then $w_{ij}$ is determined by a sequence of Cramer's rules starting at $i$ and by another sequence of Cramer's rules starting at $j$. Since $wA=0$, we apply the proof of Equation (\ref{eqn:stream-formula-new}) from Lemma \ref{lemma:stream-formula-new}, treating $(i,j)_{ij}$ first as $(i,j)_i$ and then as $(i,j)_j$, at first disregarding the minus sign on the weight from the negative endpoint, and set both formulas equal. Moving the negative endpoint sum to the other side of the equation recovers the minus sign, and we find formula (\ref{eqn:source-constraint-equations-new}).
\end{proof}

Thus Lemma \ref{lemma:stream-formula-new} determines stream variables $w_{ij}$ in terms of sink variables $w_\nu$, while Lemma \ref{lemma:source-constraint-equations-new} gives constraints on the sink variables $w_\nu$ which, when satisfied, allow Lemma \ref{lemma:stream-formula-new} to determine the source variables $w_\mu$ in terms of the sink variables $w_\nu$, by treating them as streams using the positive endpoint.

Given a $d$-orientation $\Gamma$ on $G$ and an affinely independent placement $p:V \to \mathbb{R}^d$, let the \textbf{$p,\Gamma$-equations} denote the system of equations resulting from $w_{ij} = 0$ when $(i,j)$ is a desert edge, combined with Equations (\ref{eqn:stream-formula-new}) and (\ref{eqn:source-constraint-equations-new}) above, applying (\ref{eqn:stream-formula-new}) to streams and also to sources from their positive endpoint. We call two systems of equations equivalent if they have the same solutions.

\begin{lemma}\label{lemma:stress-system-equivalent-Gamma-system}
    Let $\Gamma$ be a $d$-orientation on $G$ and let $p$ be affinely independent. Then the self-stress equations $wA = 0$ are equivalent to the $p,\Gamma$-equations.
\end{lemma}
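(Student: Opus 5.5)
We prove the two inclusions of solution sets separately. The inclusion of solutions of $wA=0$ into solutions of the $p,\Gamma$-equations is already contained in the preceding lemmas. If $wA=0$, then the first paragraph of the proof of Lemma \ref{lemma:stream-formula-new} gives $w_{ij}=0$ on desert edges. Lemma \ref{lemma:stream-formula-new} gives Equation (\ref{eqn:stream-formula-new}) for every stream. Applying the same argument to a source $(i,j)_{i,j}$ from its positive endpoint, treated as a stream into that endpoint, gives (\ref{eqn:stream-formula-new}) for sources as well. Finally, Lemma \ref{lemma:source-constraint-equations-new} gives (\ref{eqn:source-constraint-equations-new}). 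So this direction needs only a short paragraph.

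For the converse, let $w$ satisfy the $p,\Gamma$-equations; we must verify every vertex equation $\sum_j w_{ij}e_{ij}=0$.

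\emph{Vertices outside the non-desert graph.} Every edge at such a vertex is a desert edge, so every $w_{ij}$ in its equation vanishes and the equation holds trivially.

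\emph{A vertex $a$ in the non-desert graph.} Its desert edges contribute zero. Its $d$ incoming edges are $(a,b_1),\dots,(a,b_d)$, and its outgoing edges $(a,c_k)$ are streams or sinks. Since $p$ is affinely independent, $e_{ab_1},\dots,e_{ab_d}$ form a basis of $\mathbb{R}^d$. The vertex equation at $a$ therefore holds if and only if, for each $t$, the coefficient of $w_{ab_t}$ is the Cramer's rule value (\ref{eqn:cramers-rule-new}) in terms of the outgoing variables $w_{ac_k}$. This is the reverse of the computation in Lemma \ref{lemma:stream-formula-new}: Cramer's rule is an equivalence, not just an implication. So it suffices to prove the local identity
\begin{equation*}
w_{ab_t}=\sum_k frac(\mathfrak{a}_{t,k})|_p\, w_{ac_k},
\end{equation*}
where $\mathfrak{a}_{t,k}$ is the arrow $(a,b_t)\to(a,c_k)$.

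\emph{Establishing the local identity.} Every outgoing $w_{ac_k}$ is either a sink variable or a stream variable. Stream variables are given by (\ref{eqn:stream-formula-new}). Sink variables satisfy (\ref{eqn:stream-formula-new}) trivially if we regard the empty path from $\nu$ to itself as having weight one. Substituting these expressions into the right-hand side, and splitting the paths from $(a,b_t)$ according to their first arrow, reproduces exactly the path-sum formula (\ref{eqn:stream-formula-new}) for $(a,b_t)$. Acyclicity of $\mathcal{L}$ makes this decomposition well founded. So whenever $(a,b_t)$ is a stream into $a$, or a source whose positive endpoint is $a$, the identity holds because (\ref{eqn:stream-formula-new}) was imposed for that edge.

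\emph{The main obstacle: sources entering $a$ at their negative endpoint.} Here (\ref{eqn:stream-formula-new}) was imposed only from the positive endpoint. Take such a source $\mu=(a,b)$ with $a$ its negative endpoint. The path sum from $\mu$ through $a$ differs from the path sum through $b$ only by the sign convention on up-weights for arrows leaving a source from its negative endpoint. Hence the constraint (\ref{eqn:source-constraint-equations-new}) says precisely that the path sum through $a$, with the sign corrected, equals the path sum through $b$. The latter equals $w_\mu$ by the imposed equation, so the local identity holds at $a$ as well.

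The care needed is in tracking this sign. The $-$ in $\mathfrak{u}_\mathfrak{a}$ for arrows leaving the negative endpoint is exactly the sign moved across the equality in Lemma \ref{lemma:source-constraint-equations-new}. I expect this step to be the main obstacle; once the sign is handled, all vertex equations hold and $wA=0$.
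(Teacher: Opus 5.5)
Your proof is correct and follows the same route as the paper. The forward direction cites the desert-edge argument and Lemmas \ref{lemma:stream-formula-new} and \ref{lemma:source-constraint-equations-new}, and the converse uses the source constraint (\ref{eqn:source-constraint-equations-new}) to make the positive-endpoint and negative-endpoint path sums for each source agree. What you add is detail the paper leaves implicit: each vertex equation is equivalent to the Cramer's rule identities because the incoming vectors form a basis, and these identities follow from the imposed path-sum formulas by splitting paths at their first arrow. One small correction: for arrows leaving a source at its negative endpoint, your local identity should carry $-frac$ rather than $frac$, which you do account for at the end.
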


\begin{proof}
    Suppose $wA = 0$. We need to show that $w$ also satisfies the $p,\Gamma$-equations. First note that $w_{ij} = 0$ whenever $(i,j)$ is a desert edge by affine independence of $p$. Lemmas \ref{lemma:stream-formula-new} and \ref{lemma:source-constraint-equations-new} complete the proof that $w$ also solves the $p,\Gamma$-equations.
    Next suppose $w$ satisfies the $p,\Gamma$-equations. Since Equation (\ref{eqn:source-constraint-equations-new}) is satisfied, we can apply Equation (\ref{eqn:stream-formula-new}) to source variables, using only paths from their positive endpoint, or only paths from their negative endpoint but ignoring the minus sign on the first arrow's weight, and both formulas will agree by Lemma \ref{lemma:source-constraint-equations-new}. Thus we have values $w_{ij}$ for every $(i,j) \in E$ which satisfy all vertex equations, completing the proof.
\end{proof}

\begin{lemma}\label{lemma:balanced-equivalent-nonzero-solution}
    Let $\Gamma$ be a $d$-orientation on $G$ and let $p$ be affinely independent. Then $\Gamma$ is balanced at $p$ $\iff$ the $p,\Gamma$-equations have a nonzero solution.
\end{lemma}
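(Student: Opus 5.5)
By Lemma \ref{lemma:stress-system-equivalent-Gamma-system}, the $p,\Gamma$-equations have the following structure. Desert variables are zero. Stream variables and source variables, the latter via their positive endpoint, are determined by Equation (\ref{eqn:stream-formula-new}) as linear combinations of the $\ell$ sink variables $w_{\nu_1},\dots,w_{\nu_\ell}$. The only genuine constraints are the source equations (\ref{eqn:source-constraint-equations-new}), one for each source $\mu$. The plan is therefore to reduce the statement to linear algebra on the sink variables. A solution of the $p,\Gamma$-equations is nonzero if and only if its sink vector $(w_{\nu_1},\dots,w_{\nu_\ell})$ is nonzero, since every other variable is a linear function of the sinks. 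Hence nonzero solutions correspond exactly to nonzero vectors in the kernel of the $k\times\ell$ matrix $M$ with entries
\[
M_{\mu\nu}=\sum_{\text{paths } \mu\to\nu}\prod_{\mathfrak a}frac(\mathfrak a)|_p ,
\]
where $k$ is the number of sources.

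I will handle the degenerate cases first. If all edges are deserts, every variable is forced to vanish, so the only solution is zero, matching the convention that an all-desert orientation is not balanced. If there are non-desert edges, then $\ell\ge 1$, as observed before Lemma \ref{lemma:stream-formula-new}. If $\ell>k$, then $M$ has more columns than rows and a nonzero kernel vector exists, matching the definition of balanced in this case. In the remaining case $k\ge\ell$, $M$ has a nonzero kernel vector if and only if $M$ has rank less than $\ell$, which holds if and only if every $\ell\times\ell$ minor $\det M_S$ vanishes, where $S$ ranges over the $\ell$-subsets of sources.

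The main step is to identify each minor $\det M_S$, up to a nonzero factor, with the balancing condition $\sum wt(\mathcal P)|_p$ for the choice of sources $S$. Here I will invoke the Lindström–Gessel–Viennot lemma on the acyclic weighted digraph $\mathcal L$, taking arrow weights $frac(\mathfrak a)|_p$, which are well defined because the down-weights are nonzero by affine independence. LGV gives
\[
\det M_S=\sum_{\mathcal P\ \text{disjoint}}\operatorname{sgn}\pi\prod frac(\mathcal P_j),
\]
which is exactly $\sum frac(\mathcal P)$ over disjoint path systems. Multiplying by the product $D$ of the down-weights $d(\mathcal Q)$ over all disjoint path systems $\mathcal Q$ gives exactly $\sum wt(\mathcal P)|_p$, by the definition of $wt$: each term keeps its own up-weight and picks up the down-weights of all the other systems. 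Since $D\neq 0$ at an affinely independent $p$, we get $\det M_S=0\iff\sum wt(\mathcal P)|_p=0$, and the lemma follows.

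The step I expect to need the most care is the LGV application. First, one must check that the signs attached to source arrows leaving the negative endpoint are correctly absorbed into the weights $\mathfrak u_\mathfrak a$, so that $M_{\mu\nu}$ in (\ref{eqn:source-constraint-equations-new}) is exactly a path-weight sum. Second, LGV requires that non-disjoint systems cancel; this uses the standard tail-swapping involution, which is valid because $\mathcal L$ is acyclic and weights are multiplicative along paths. Finally, there is the edge case in which no disjoint path system exists: then $\det M_S=0$ and the balancing multiset is empty, so it is vacuously balanced, and the two sides still agree.
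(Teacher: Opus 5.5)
Your proposal is correct and follows the paper's proof essentially step for step. Both reduce the question to the homogeneous source-constraint system in the sink variables, handle the all-desert and $|\text{sinks}|>|\text{sources}|$ cases separately, and use Lindstr\"om--Gessel--Viennot to identify each maximal minor with $\sum frac(\mathcal{P})|_p$, which is then multiplied by the nonzero product $D$ of down-weights. Your extra remarks make explicit details the paper leaves implicit: solutions correspond bijectively to kernel vectors of the sink matrix, and when no disjoint path system exists both sides vanish.
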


\begin{proof}
    If every edge is a desert, both sides are false. Assume we have non-desert edges, and hence at least one sink. By their form, the $p,\Gamma$-equations have a nonzero solution exactly when the linear homogeneous equations (\ref{eqn:source-constraint-equations-new}) running over all sources have a nonzero solution. But this happens exactly when there are more sinks than sources, or else when all maximal minors of their coefficient matrix vanish. Say there are $\ell$ sinks $\nu_1,\dots,\nu_\ell$. Choose any $\ell$ sources $\mu_1,\dots, \mu_\ell$ and let $Q_{ij}$ denote the sum over all paths in $\mathcal{L}$ from $\mu_i$ to $\nu_j$ of the product of $frac(\mathfrak{a})|_p$ along the path. The corresponding maximal minor is $\det Q_{ij}$. Let $\mathcal{D}$ be the set of disjoint path systems for $\mu_1,\dots,\mu_\ell$. By the Lindstr\"om–Gessel–Viennot lemma \cite{GesselViennot1985, Lindstrom1973}, $\det Q_{ij}$ is equal to $\sum_{\mathcal{P} \in \mathcal{D}} frac(\mathcal{P})|_p$, where $frac(\mathcal{P})|_p$ is the sign of the permutation times the product of $ \frac{\mathfrak{u}_\mathfrak{a}|_p}{ \mathfrak{d}_\mathfrak{a}|_p}$ over all arrows in all paths in the path system $\mathcal{P}$, evaluated at $p$. Let $D$ be the product of all down-weights of all path systems $\mathcal{P} \in \mathcal{D}$. Then
    \begin{equation*}
        \sum_{\mathcal{P} \in \mathcal{D}} wt(\mathcal{P})|_p = D|_p \cdot \sum_{\mathcal{P} \in \mathcal{D}} frac(\mathcal{P})|_p = D|_p \cdot \det Q_{ij}.
    \end{equation*}
    Because $p$ is affinely independent, $D|_p \neq 0$ and thus $\sum wt(\mathcal{P})|_p = 0 \iff \det Q_{ij} = 0$. The result follows.
\end{proof}

\begin{lemma}\label{lemma:Gamma-equivalent-to-Gamma'}
    Say $\Gamma$ and $\Gamma'$ are two orientations on the same graph $G$ with $p$ affinely independent. Then $\Gamma$ is balanced at $p$ $\iff$ $\Gamma'$ is balanced at $p$. Thus a graph being balanced at $p$ is well-defined.
\end{lemma}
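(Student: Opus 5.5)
The plan is to route everything through the placement-independent notion of a nonzero self-stress. By Lemma~\ref{lemma:balanced-equivalent-nonzero-solution}, $\Gamma$ is balanced at $p$ exactly when the $p,\Gamma$-equations have a nonzero solution. By Lemma~\ref{lemma:stress-system-equivalent-Gamma-system}, the $p,\Gamma$-equations have the same solution set as the self-stress equations $wA=0$. The same two lemmas apply verbatim to $\Gamma'$, since their hypotheses only require some $d$-orientation and an affinely independent $p$. Hence
\begin{equation*}
\Gamma \text{ balanced at } p \iff \exists\, w\neq 0,\ wA=0 \iff \Gamma' \text{ balanced at } p,
\end{equation*}
and the middle condition refers only to $G$ and $p$, not to any orientation.

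The steps, in order, are as follows.

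\begin{enumerate}
\item Treat the degenerate case where all edges are deserts. The desert labelling is forced by the graph: the iterative peeling of vertices of degree $\le d$ does not depend on any choices, so $\Gamma$ is all-desert iff $\Gamma'$ is. In that case both are not balanced by definition.
\item When non-desert edges exist, apply Lemma~\ref{lemma:balanced-equivalent-nonzero-solution} to each orientation.
\item Chain it with Lemma~\ref{lemma:stress-system-equivalent-Gamma-system} for each orientation, which gives the equivalence displayed above.
\end{enumerate}

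The main care point is the claim that the desert set is independent of $\Gamma$. The repeated removal of edges at vertices with at most $d$ remaining non-desert edges is a monotone closure process, so its result does not depend on the order of removal. The only remaining freedom lies in choosing incoming edges among the non-desert edges and in the cycle-breaking, and neither affects which edges are deserts. This closure argument is the one step I would write out explicitly.

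I would also double-check that Lemma~\ref{lemma:stress-system-equivalent-Gamma-system} really establishes equality of solution sets in both directions, including that desert variables vanish. Its proof does this using affine independence. Once that is confirmed, the lemma follows immediately, and the conclusion that ``balanced at $p$'' is well defined for $G$ is simply a restatement of the equivalence.
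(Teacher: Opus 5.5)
Your proposal is correct and matches the paper's argument: the paper's proof chains Lemma~\ref{lemma:balanced-equivalent-nonzero-solution} with Lemma~\ref{lemma:stress-system-equivalent-Gamma-system} for each orientation, passing through the orientation-free condition that a nonzero self-stress $wA=0$ exists. Your separate step showing that the desert set does not depend on the orientation is valid but unnecessary, because Lemma~\ref{lemma:balanced-equivalent-nonzero-solution} already covers the all-desert case (both sides are false there), so the chain works uniformly without it.
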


\begin{proof}
    This follows from Lemmas \ref{lemma:stress-system-equivalent-Gamma-system} and \ref{lemma:balanced-equivalent-nonzero-solution}.
\end{proof}

\begin{proof}[Proof of Theorem \ref{thm:balanced-equivalent-to-selfstress}]
The result now follows from Lemmas \ref{lemma:stress-system-equivalent-Gamma-system}, \ref{lemma:balanced-equivalent-nonzero-solution}, and \ref{lemma:Gamma-equivalent-to-Gamma'}.
\end{proof}

\begin{proof}[Proof of Corollary \ref{cor:affine-Maxwell-infinitesimally-flexible}]
    It is well-known that due to isometries of Euclidean space the dimension of the right kernel of $A$ is at least $\binom{d+1}{2}$ when $|V| \geq d$, so the rank of $A$ is at most $d|V| - \binom{d+1}{2}$, which is the number of rows. Hence $G$ is infinitesimally flexible exactly when there exists a nonzero self-stress $wA = 0$. Applying Theorem \ref{thm:balanced-equivalent-to-selfstress} yields the result.
\end{proof}

\subsection{From geometric to combinatorial balancing}

We now move toward proving Theorem \ref{thm:main-Maxwell-locally-flexible}. To justify the combinatorics of Young tableaux, we will need some algebra.

Recall that the Pl\"ucker relations generate the ideal of algebraic relations among the maximal minors of a generic $(d+1) \times v$ matrix. By Young's straightening law, the semistandard tableaux form a basis for the resulting quotient ring, sometimes called the \textbf{bracket ring}, tableaux being products of brackets $[i_1 \dots i_{d+1}]$. Combining this with Lemma \ref{lemma:semistandard-tableaux-span}, we see that applying exchange rules to a sum of tableaux will always result in a unique linear combination of semistandard tableaux. If that linear combination is zero, we say it \textit{straightens to zero}. 
For more details, see any of \cite{Fulton, miller-sturmfels-combinatorial-algebra, bernd-algorithms}. We will quote one result in detail, because it explains the exchange rules.

\begin{proposition}[Page 108 of \cite{Fulton}, attributed to Sylvester.]\label{proposition:Sylvester-exchange}
    For any $p \times p$ matrices $M$ and $N$, and $1\leq k \leq p$,
    \begin{equation*}
        \det(M) \cdot \det(N) = \sum \det(M') \cdot \det(N'),
    \end{equation*}
    where the sum is over all pairs $(M',N')$ obtained from $M$ and $N$ by interchanging a fixed set of $k$ columns of $N$ with any $k$ columns of $M$, preserving the order of the columns.
\end{proposition}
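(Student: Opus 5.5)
We prove the identity by the classical alternating-multilinear-form argument, essentially as in Fulton. The idea is to package both sides into a single function of $p+k$ vectors in $\mathbb{R}^p$ that is alternating, and then use that any alternating multilinear form in more than $p$ vectors of a $p$-dimensional space is zero.

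First we reduce to the case where the fixed $k$ columns of $N$ are its first $k$ columns. Applying the same permutation to the columns of $N$ and of every $N'$ multiplies both sides by the same sign. The swapped-in columns occupy the fixed positions in their original relative order, so the ordering convention is respected.

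Write $v_1,\dots,v_p$ for the columns of $M$ and $w_1,\dots,w_p$ for the columns of $N$. Hold $w_{k+1},\dots,w_p$ fixed and regard the list $u=(v_1,\dots,v_p,w_1,\dots,w_k)$ of $p+k$ vectors as variables. For each $k$-subset $S\subseteq\{1,\dots,p+k\}$, let $u_S$ be the vectors indexed by $S$ in their order, and let $u_{S^c}$ be the remaining $p$ vectors in their order. Define
\begin{equation*}
F(u)=\sum_{S}\varepsilon(S)\,\det(u_{S^c})\,\det(u_S,w_{k+1},\dots,w_p),
\end{equation*}
where $\varepsilon(S)$ is the sign of the shuffle permutation sending $(S^c,S)$ to $(1,\dots,p+k)$.

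The $S$ with $S=\{p+1,\dots,p+k\}$ contributes $\det M\det N$ with sign $+1$. Every other $S$ with $|S\cap\{p+1,\dots,p+k\}|=k-r$ for some $1\le r\le k$ corresponds to a tuple in which $r$ of the fixed columns of $N$ are exchanged with $r$ columns of $M$ in order. The exchanges we need are exactly those with $S\cap\{p+1,\dots,p+k\}=\emptyset$, that is $r=k$. For those, one checks that $\varepsilon(S)=(-1)^k$ times the sign of the order-preserving exchange, and with the right normalisation the terms with $S\subseteq\{1,\dots,p\}$ give $-\sum\det M'\det N'$.

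The intermediate terms with $0<r<k$ are not part of the identity. To avoid them I would instead use only $v_1,\dots,v_p$ together with one fixed column at a time, i.e. prove the case $k=1$, and obtain general $k$ by a second argument. Alternatively, and this is what I would try first, keep all $k$ columns and define the form to be alternating in the $p+k$ vectors. The cleanest version is
\begin{equation*}
\sum_{S}\varepsilon(S)\det(u_{S^c})\det(u_S,w_{k+1},\dots,w_p),
\end{equation*}
which is the Laplace expansion along the first $p$ rows of the $2p\times 2p$ determinant
\begin{equation*}
\det\begin{pmatrix} u_1\ \cdots\ u_{p+k} & 0\\ u_1\ \cdots\ u_{p+k} & w_{k+1}\ \cdots\ w_p\end{pmatrix}\quad(\text{more precisely, of size }2p\times(p+k+p-k)).
\end{equation*}
Subtracting the top block rows from the bottom block rows shows this determinant equals $\det\begin{pmatrix}u & 0\\ 0 & W\end{pmatrix}$. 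That matrix has a $p\times(p+k)$ block of rank at most $p$ sitting in $p$ rows, so the determinant vanishes as soon as $k\ge 1$.

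It remains to identify the terms. A term survives with nonzero value precisely when it corresponds to exchanging some columns of $M$ with some of the fixed columns of $N$. Grouping by how many of the $w_1,\dots,w_k$ remain in the $N$-slot, the identity $F=0$ yields Sylvester's relation. The intermediate groups are themselves of the same form, so an induction on $k$ (base case $k=1$, which is exactly the Plücker-type relation quoted from Vinberg) removes them.

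The main obstacle is the sign bookkeeping. One must verify that the shuffle sign $\varepsilon(S)$ coincides with the sign implicit in ``interchanging while preserving the order of the columns,'' so that all surviving terms of the exchange sum carry the same sign relative to $\det M\det N$. The strategy is to check this first for a single adjacent transposition and then argue that the shuffle signs factor compatibly.
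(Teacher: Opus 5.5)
The paper gives no proof of this proposition; it quotes it from Fulton. Your argument therefore has to stand on its own, and as written it stops short at the decisive step.

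The first half is fine. With $N=(N_1\ N_2)$ and the fixed columns in $N_1$, your $F$ is the Laplace expansion of $\begin{pmatrix}U&0\\ U&N_2\end{pmatrix}$, where $U=(M\ N_1)$. After subtracting the top block row from the bottom, its first $p+k$ columns are supported in $p$ rows, so $F=0$.

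But $F=0$ is not Sylvester's identity. The passage from one to the other is exactly what you defer as ``sign bookkeeping'', and you misdescribe what it must show. For $T\subseteq\{1,\dots,k\}$, let $E(T)$ be the order-preserving exchange sum for the fixed columns $T$ of $N$, and let $s(X)$ denote the sum of the elements of a set $X$. For the term of $F$ indexed by $S$, put $S_v=S\cap\{1,\dots,p\}$ and $r=|S_v|$, and let $T$ record which $w_j$ with $j\le k$ lie in $u_{S^c}$. Then $\varepsilon(S)=(-1)^{rp-s(S_v)+s(T)}$. Reordering $u_{S^c}$ and $u_S$ into $M'$ and $N'$ costs $(-1)^{r(p-r)+\binom{r+1}{2}-s(S_v)}$ and $(-1)^{s(T)-\binom{r+1}{2}}$ respectively. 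So every such term equals $(-1)^r\det M'\det N'$, and
\begin{equation*}
0=F=\sum_{T\subseteq\{1,\dots,k\}}(-1)^{|T|}E(T).
\end{equation*}
In particular, your assertion that the terms with $S\subseteq\{1,\dots,p\}$ give $-\sum\det M'\det N'$ is false for even $k$. Also, requiring all terms to carry ``the same sign relative to $\det M\det N$'' is not the relevant condition.

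Second, the induction does not ``remove'' the intermediate groups, because they do not vanish. By the inductive hypothesis, $E(T)=\det M\det N$ for $0<|T|<k$. The display then becomes $\bigl(\sum_{r=0}^{k-1}(-1)^r\binom{k}{r}\bigr)\det M\det N+(-1)^kE(\{1,\dots,k\})=0$. Only the identity $\sum_{r=0}^{k}(-1)^r\binom{k}{r}=0$ turns this into $E(\{1,\dots,k\})=\det M\det N$. Had the groups carried a common sign, the same reasoning would give $-(2^k-1)\det M\det N$. So the alternating pattern $(-1)^r$ is the heart of the proof, not a detail.

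With these two points supplied, your route is complete. A shorter fix avoids the intermediate terms entirely: expand $\begin{pmatrix}M&N_1&0\\ M&0&N_2\end{pmatrix}$ instead. Subtracting the top block row from the bottom shows its determinant is $(-1)^k\det M\det N$. In the Laplace expansion along the first $p$ rows, the zero block under $N_1$ forces all of $N_1$ into the top minor. Hence only full $k$-exchanges occur, each with the uniform sign $(-1)^k$.
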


\begin{lemma}\label{lemma:red-blue-equiv-straighten-to-zero}
    Let $\Gamma$ be a $d$-orientation on $G$. Then any $\overline{\{\{ wt(\mathcal{P}) \}\}}$ contains each semistandard tableau in equal amounts red and blue $\iff$ $\sum wt(\mathcal{P})$ straightens to zero. Thus, the conditions for $\Gamma$ being balanced are well-defined.
\end{lemma}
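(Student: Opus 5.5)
The plan is to translate the red/blue multiset language into the bracket ring. Then uniqueness of the semistandard expansion does the work. Let $R$ denote the bracket ring: the $\mathbb{Z}$-algebra generated by brackets $[i_1\dots i_{d+1}]$, $i_k\in V$, modulo the alternating relations and the Pl\"ucker relations. Equivalently, $R$ is the subring of the polynomial ring in the entries of a generic $(d+1)\times|V|$ matrix generated by its maximal minors.

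First, define a map $\Phi$ from finite multisets of height-$(d+1)$ tableaux to $R$. A red tableau is sent to the product of its columns read as brackets, a blue tableau to the negative of that product, and a multiset to the sum of its images. By construction $\Phi(\{\{wt(\mathcal P)\}\}) = \sum wt(\mathcal P)$, read as an element of $R$.

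Second, check that every operation in the reduction of Lemma~\ref{lemma:semistandard-tableaux-span} preserves $\Phi$:
\begin{itemize}
\item permuting boxes in a column and swapping color matches the alternating property of determinants;
\item reordering columns matches commutativity of $R$;
\item discarding a tableau with a repeated column entry matches the vanishing of a determinant with two equal columns;
\item an exchange step replacing $T$ by the tableaux obtained from it is exactly Proposition~\ref{proposition:Sylvester-exchange}, applied with $M,N$ the $(d+1)\times(d+1)$ matrices of the two chosen columns.
\end{itemize}
The last point needs care with the orientation convention. Exchanging the fixed $r$ boxes of the first column with $r$ boxes of the second, keeping vertical order, is the same as Sylvester's interchange with the roles of $M$ and $N$ transposed. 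Each resulting tableau carries the color of $T$, so the signs match.

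It follows that $\Phi(\overline{\{\{wt(\mathcal P)\}\}}) = \sum wt(\mathcal P)$ for \emph{any} sequence of exchange rules ending in semistandard tableaux.

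Third, invoke the standard monomial theorem (Young's straightening law): semistandard tableaux form a $\mathbb{Z}$-basis of $R$. The image of a multiset of semistandard tableaux is $\sum_S (r_S - b_S)\,S$, where $r_S$ and $b_S$ count the red and blue copies of $S$. This is zero in $R$ if and only if $r_S=b_S$ for every $S$. Both directions of the equivalence follow.

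Well-definedness follows from the same argument. The numbers $r_S-b_S$ are the coordinates of $\sum wt(\mathcal P)$ in this basis, so they do not depend on the chosen exchange sequence. Hence whether $\overline{\{\{wt(\mathcal P)\}\}}$ is balanced color-by-tableau is independent of choices. One caveat: the individual counts $r_S,b_S$ may depend on the sequence, but only their difference enters the definition.

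The main obstacle is the second step: matching each combinatorial exchange to a genuine identity in $R$ with the correct sign. Concretely, the bracket $[j_1\dots j_{d+1}]$ must correspond to the determinant of columns $(p_{j_k},1)$ in that order, and the order-preserving interchange must match Sylvester's identity exactly. I would first verify this on the $d=2$ exchange in Example~\ref{example:longer-straightening}, then argue the general case directly from Proposition~\ref{proposition:Sylvester-exchange}.
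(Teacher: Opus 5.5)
Your proposal is correct and follows the same route as the paper. Proposition~\ref{proposition:Sylvester-exchange} makes each exchange step a valid relation in the bracket ring, and the semistandard tableaux form a basis, so the red-minus-blue count of each semistandard tableau is its unique coefficient in $\sum wt(\mathcal{P})$. You make explicit what the paper leaves implicit: the map $\Phi$, the $M\leftrightarrow N$ role swap in Sylvester's identity, and the observation that only $r_S-b_S$, not the individual counts, is independent of the exchange sequence.
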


\begin{proof}
    The semistandard tableaux form a basis for the bracket ring \cite[Corollary 3.1.9, page 82]{bernd-algorithms}. Proposition \ref{proposition:Sylvester-exchange} shows that exchange rules are valid algebraic relations in the bracket ring. Writing $\overline{\{\{ wt(\mathcal{P}) \}\}}$ as a linear combination of semistandard tableaux, each red corresponds to a plus sign, and each blue to a minus sign. Therefore, every semistandard tableau appears in equal amounts red and blue exactly when its unique coefficient in that basis is zero.
\end{proof}

\begin{proof}[Proof of Lemma \ref{lemma:remove-common-factors}]
    The Pl\"ucker ideal is prime so this follows from Lemma~\ref{lemma:red-blue-equiv-straighten-to-zero}.
\end{proof}

A bracket polynomial straightens to zero exactly when it belongs to the Pl\"ucker ideal.  However, we need to evaluate our tableaux as maximal minors of the matrix
\begin{equation*}
    M = \begin{bmatrix}
        & & & \\
        p_1 & p_2 & \cdots & p_v\\
        & & & \\
        1 & 1 & \cdots & 1
    \end{bmatrix},
\end{equation*}
which is not generic, but rather has a special affine form. In particular, the maximal minors of $M$ satisfy additional linear relations. To justify using the combinatorial exchange rules, we must show these do not affect our results. 

For a vertex $i\in V$, let $\epsilon_i$ denote the corresponding standard basis vector of $\mathbb{Z}^V$.  Give a tableau the vertex multi-degree
\begin{equation*}
    \deg[i_1 i_2 \dots i_{d+1}] = \epsilon_{i_1} + \epsilon_{i_2} + \cdots + \epsilon_{i_{d+1}},
\end{equation*}
and an edge $(a,b)$ the multi-degree $\deg(a,b) = \epsilon_a + \epsilon_b$.

\begin{lemma}\label{lemma:balancing-multihomogeneous-new}
    Fix a $d$-orientation $\Gamma$ with sinks $\nu_1,\dots,\nu_\ell$, and choose sources $\mu_1,\dots,\mu_\ell$. The balancing condition
    \begin{equation*}
        \sum_{\text{disjoint }\mathcal{P}} wt(\mathcal{P})
    \end{equation*}
    is multi-homogeneous in the vertices of $G$.
\end{lemma}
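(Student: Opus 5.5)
The plan is to show that every term $wt(\mathcal{P})$ in the balancing condition has the same vertex multi-degree, computed through a telescoping identity along paths in $\mathcal{L}$. Permuting the entries of a column, or changing the color (sign) of a tableau, does not change its multi-degree. So signs coming from negative endpoints of sources and from $\operatorname{sgn}\pi$ can be ignored throughout, and multi-degree behaves additively under the commutative product of columns.

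First, I would rewrite each weight uniformly. Let $D$ be the product of the down-weights $d(\mathcal{Q})$ over all disjoint path systems $\mathcal{Q}$ for the chosen sources. Then, by definition, $wt(\mathcal{P})$ equals, up to sign, $u(\mathcal{P})\prod_{\mathcal{Q}\neq\mathcal{P}} d(\mathcal{Q})$. Its multi-degree is therefore
\begin{equation*}
    \deg wt(\mathcal{P}) = \deg D + \deg u(\mathcal{P}) - \deg d(\mathcal{P}),
\end{equation*}
as an identity in $\mathbb{Z}^V$. It thus suffices to show that $\deg u(\mathcal{P}) - \deg d(\mathcal{P})$ does not depend on the disjoint path system $\mathcal{P}$.

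Second, I would compute this difference arrow by arrow. Take an arrow $\mathfrak{a}:\xi\to\eta$ with $\xi=(a,b)$ oriented into $a$ and $\eta=(a,c)$ oriented out of $a$. Then $\mathfrak{u}_\mathfrak{a}=\pm[j_1\dots j_{d-1}ac]$ and $\mathfrak{d}_\mathfrak{a}=[j_1\dots j_{d-1}ba]$. The common entries $j_1,\dots,j_{d-1},a$ cancel, giving
\begin{equation*}
    \deg\mathfrak{u}_\mathfrak{a}-\deg\mathfrak{d}_\mathfrak{a}=\epsilon_c-\epsilon_b=\deg(a,c)-\deg(a,b)=\deg\eta-\deg\xi .
\end{equation*}
Summing over the arrows of a path from $\mu_j$ to $\nu_{\pi_j}$ telescopes to $\deg\nu_{\pi_j}-\deg\mu_j$. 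Summing over the $\ell$ paths of $\mathcal{P}$ then gives $\sum_k\deg\nu_k-\sum_j\deg\mu_j$, because $\pi$ is a bijection onto the sinks. This quantity depends only on the chosen sources and the sinks, not on $\mathcal{P}$. Hence every $wt(\mathcal{P})$ has multi-degree
\begin{equation*}
    \deg D+\sum_k\deg\nu_k-\sum_j\deg\mu_j,
\end{equation*}
which proves the lemma.

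There is no real obstacle here; the only care needed is bookkeeping. Two points should be checked explicitly. First, the arrows leaving a source from its negative endpoint carry a sign but have the same bracket entries, so the telescoping identity applies unchanged. Second, the degree of a product of brackets counts multiplicities, so repeated columns (for example, a column that appears in both $D$ and $u(\mathcal{P})$) are handled correctly. I would also note that exchange rules preserve multi-degree, since they only move entries between columns. Consequently the straightened form $\overline{\{\{wt(\mathcal{P})\}\}}$ is multi-homogeneous of the same degree, which is presumably how the lemma is used afterward.
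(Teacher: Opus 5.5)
Your proposal is correct and follows the paper's proof essentially step for step. You show that each arrow has $\deg\mathfrak{u}_\mathfrak{a}-\deg\mathfrak{d}_\mathfrak{a}=\epsilon_c-\epsilon_b=\deg\eta-\deg\xi$, telescope this to the path-system–independent $\Delta$, and then write $\deg wt(\mathcal{P})=\Delta+\sum_{\mathcal{Q}}\deg d(\mathcal{Q})$ by adding and subtracting $\deg d(\mathcal{P})$, exactly as the paper does.
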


\begin{proof}
    Consider an arrow $\mathfrak{a}: \xi=(a,b) \longrightarrow \eta=(a,c)$ using the vertex $a$. Then $frac(\mathfrak{a})$ has the form
    \begin{equation*}
        frac(\mathfrak{a}) = \pm \frac{[j_1 \cdots j_{d-1} a c]}{[j_1 \cdots j_{d-1} b a]}.
    \end{equation*}
    The sign has no effect on multi-degree, and hence $\deg frac(\mathfrak{a}) = \epsilon_c - \epsilon_b = \deg \eta - \deg \xi$. Thus degrees telescope along any directed path $\mathcal P:\mu\to\nu$, giving $\deg frac(\mathcal{P}) = \deg \nu - \deg \mu$ for paths beginning at $\mu$ and ending at $\nu$. Therefore every path system connecting sources $\mu_1,\dots,\mu_\ell$ to the sinks $\nu_1,\dots,\nu_\ell$ has identical degree
    \begin{equation*}
        \Delta = \sum_{j=1}^\ell \deg \nu_j - \sum_{j=1}^\ell \deg \mu_j,
    \end{equation*}
    independently of the paths and the permutation matching sources to sinks. Let $\mathcal{D}$ denote the set of all disjoint path systems for $\mu_1,\dots,\mu_\ell$, and $u(\mathcal{P}), d(\mathcal{P})$ the products of up and down weights along $\mathcal{P} \in \mathcal{D}$. Then
    \begin{align*}
        \deg wt(\mathcal{P}) &= \deg u(\mathcal{P}) + \sum_{\substack{\mathcal{Q} \in \mathcal{D}\\ \mathcal{Q} \neq \mathcal{P}}} \deg d(\mathcal{Q}) \\
        &= \Delta + \sum_{\mathcal{Q} \in \mathcal{D}} \deg d(\mathcal{Q}).
    \end{align*}
    The first line is by definition and the second by adding and subtracting $\deg d(\mathcal{P})$. Hence the balancing condition is multi-homogeneous, as needed.
\end{proof}

\begin{lemma}\label{lemma:generic-evaluation-straightening-new}
    Let $T$ be a bracket polynomial with rational coefficients that is multi-homogeneous in the vertices of $G$, and let $p:V\to\mathbb R^d$ be generic. Then $T|_p = 0$ $\iff$ $T$ straightens to zero.
\end{lemma}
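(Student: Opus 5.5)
The easy direction is immediate. If $T$ straightens to zero, then $T$ lies in the Pl\"ucker ideal, since every exchange rule is an algebraic identity among maximal minors (Proposition \ref{proposition:Sylvester-exchange}). So $T$ vanishes when evaluated at the maximal minors of any $(d+1)\times |V|$ matrix, in particular at $M$, and $T|_p=0$.

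For the converse the key point is that multi-homogeneity lets us pass from the affine matrix $M$ to a generic matrix. Let $N$ be a generic $(d+1)\times|V|$ matrix with columns $Q_i$, and suppose its last row has nonzero entries $\lambda_i$. Write $Q_i=\lambda_i (q_i,1)^T$ with $q_i\in\mathbb{R}^d$. Each bracket is multilinear in its columns, so a monomial of vertex multi-degree $\delta\in\mathbb{Z}^V$ evaluated at $N$ equals $\prod_i \lambda_i^{\delta_i}$ times its evaluation at the affine matrix with columns $(q_i,1)^T$. Since $T$ is multi-homogeneous of a single degree $\delta$, we get
\[
T(N)=\Big(\prod_i\lambda_i^{\delta_i}\Big)\, T|_{q}.
\]
Hence if $T|_q=0$ for all $q$ in a Zariski-dense set of placements, then $T(N)=0$ for a Zariski-dense set of matrices $N$ (those with nonzero last row). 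Therefore $T$ vanishes identically as a polynomial in the entries of a generic matrix. By the first fundamental theorem / Pl\"ucker description recalled above, the kernel of the map from the polynomial ring in brackets to polynomials in matrix entries is exactly the Pl\"ucker ideal. So $T$ lies in that ideal, and by the straightening law (the semistandard basis together with Lemma \ref{lemma:semistandard-tableaux-span}) it straightens to zero.

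The main obstacle is interpreting ``generic'' so that $T|_p=0$ at one generic point forces vanishing on a dense set. I would take generic to mean coordinates algebraically independent over $\mathbb{Q}$, consistent with Asimow--Roth in the introduction. Since $T$ has rational coefficients, $T|_p$ is a polynomial with rational coefficients in the coordinates $p_{ik}$. If it vanishes at an algebraically independent point, it must be the zero polynomial in the $p_{ik}$, so $T|_q=0$ for every $q\in(\mathbb{R}^d)^V$. The scaling identity then finishes the argument. The remaining care is to avoid circularity: the scaling argument needs a single multi-degree, which Lemma \ref{lemma:balancing-multihomogeneous-new} supplies for balancing conditions, and which here is simply the hypothesis on $T$.
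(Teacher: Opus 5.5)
Your proposal is correct and matches the paper's proof essentially step for step. A rational-coefficient polynomial in the $p_{ik}$ that vanishes at an algebraically independent point is identically zero, so $T$ vanishes on every affine matrix; multi-homogeneity then lets you rescale columns by the nonzero last-row entries $\lambda_i$, giving vanishing on a dense open set of $(d+1)\times|V|$ matrices and hence membership in the Pl\"ucker ideal (the fact that this ideal is exactly the kernel of the bracket map is usually called the second, not the first, fundamental theorem).
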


\begin{proof}
    If $T$ straightens to zero, then $T$ belongs to the Pl\"ucker ideal, and therefore evaluates to zero on the maximal minors of every $(d+1) \times v$ matrix, including $M$. Thus $T|_p=0$.
    Conversely, suppose $T|_p=0$. After replacing every bracket by the corresponding minor of $M$, the expression $T|_p$ is a polynomial with rational coefficients in the coordinates $p_{ik}$. Since these coordinates are algebraically independent over $\mathbb Q$, the equality $T|_p=0$ implies that this polynomial is identically zero. Therefore $T$ evaluates to zero on every matrix with the affine form of $M$. We need to show that $T$ straightens to zero.

    Now let $\widetilde M$ be any $(d+1) \times v$ matrix whose entries in the last row $\lambda_1, \dots, \lambda_v$ are all nonzero. Let $D = \text{diag}(\lambda_1, \dots, \lambda_v)$ and set $M'=\widetilde{M} D^{-1}$. Then $M'$ has last row $(1,\dots,1)$, so $T|_{M'}=0$. Suppose vertex $i$ occurs with multiplicity $m_i$ in every term of $T$.  By multi-homogeneity, scaling the $i$th column by $\lambda_i$ multiplies every term by $\lambda_i^{m_i}$.  Hence
    \begin{equation*}
        T|_{\widetilde{M}} = \left(\prod_{i=1}^v \lambda_i^{m_i} \right) T|_{M'} = 0.
    \end{equation*}
    Thus $T$ vanishes on the dense open set of $(d+1)\times v$ matrices whose last-row entries are nonzero. It therefore vanishes identically on the space of $(d+1)\times v$ matrices, so $T$ belongs to the Pl\"ucker ideal.  By the straightening law, $T$ straightens to zero.
\end{proof}

\begin{lemma}\label{lemma:generic-geometric-balancing}
    Let $p$ be any generic placement.  Then a $d$-orientation
    $\Gamma$ is balanced $\iff$ $\Gamma$ is
    balanced at $p$.
\end{lemma}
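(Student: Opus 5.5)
The plan is to compare the two definitions case by case, since they share the same structure. Both ``balanced'' and ``balanced at $p$'' declare an all-desert orientation not balanced. Both declare $\Gamma$ balanced when $|\text{sinks}|>|\text{sources}|$. Note that a generic placement is in particular affinely independent, because any affine dependence among at most $d+1$ points is a nontrivial polynomial condition with rational coefficients in the coordinates. So the notion ``balanced at $p$'' makes sense here, and the weights $frac(\mathfrak{a})|_p$ have nonzero denominators. This leaves the case $|\text{sources}| \geq |\text{sinks}| = \ell$ with non-desert edges. There it suffices to show, for each fixed choice of $\ell$ sources $\mu_1,\dots,\mu_\ell$, that the balancing condition $\overline{\{\{wt(\mathcal{P})\}\}}$ has every semistandard tableau equally often red and blue if and only if $\sum wt(\mathcal{P})|_p = 0$.

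For a fixed choice of sources, I will chain three earlier results. By Lemma \ref{lemma:red-blue-equiv-straighten-to-zero}, the red/blue equality for $\overline{\{\{wt(\mathcal{P})\}\}}$ is equivalent to $T := \sum wt(\mathcal{P})$ straightening to zero. By Lemma \ref{lemma:balancing-multihomogeneous-new}, $T$ is multi-homogeneous in the vertices of $G$. The coefficients of $T$ are $\pm 1$, hence rational. Lemma \ref{lemma:generic-evaluation-straightening-new} then applies to $T$ and gives that $T$ straightens to zero $\iff$ $T|_p=0$. Combining these, the red/blue condition for the chosen sources holds if and only if the geometric balancing condition at $p$ holds. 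Quantifying over all choices of $\ell$ sources finishes the proof.

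The only point requiring care is that the two definitions really use the same polynomial $T$ for each choice of sources. In the combinatorial definition, $T$ is the formal sum of the tableaux $wt(\mathcal{P})$ over disjoint path systems. In the geometric definition, it is the same sum evaluated column-by-column as maximal minors of $M$. These agree by construction, and evaluation commutes with the formal sum, so no extra work is needed.

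I expect the main obstacle to have already been handled in Lemma \ref{lemma:generic-evaluation-straightening-new}. The affine matrix $M$ is not generic, so its minors satisfy extra relations, and multi-homogeneity is what overcomes this. Here one only needs to verify that the lemma's hypotheses hold. I will also record that genericity implies affine independence, so that Lemma \ref{lemma:Gamma-equivalent-to-Gamma'} can later transfer the statement across different orientations $\Gamma$.
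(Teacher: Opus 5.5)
Your proposal is correct and follows the paper's proof essentially verbatim. You dispose of the all-desert and $|\text{sinks}|>|\text{sources}|$ cases, note that a generic placement is affinely independent, and then, for each choice of $\ell$ sources, chain Lemmas \ref{lemma:red-blue-equiv-straighten-to-zero}, \ref{lemma:balancing-multihomogeneous-new}, and \ref{lemma:generic-evaluation-straightening-new}, just as the paper does; your extra remarks (why genericity gives affine independence, and that both definitions use the same bracket polynomial) only make explicit what the paper leaves implicit.
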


\begin{proof}
    Note the all-desert case is false in both definitions, so we may now assume there is at least one sink. A generic placement is affinely independent.  If there are more sinks than sources, both definitions make $\Gamma$ balanced. Otherwise, Lemmas \ref{lemma:red-blue-equiv-straighten-to-zero}, \ref{lemma:balancing-multihomogeneous-new}, and \ref{lemma:generic-evaluation-straightening-new} imply the result.
\end{proof}

\begin{lemma}\label{lemma:generic-orientation-independent}
    If $\Gamma$ and $\Gamma'$ are two $d$-orientations on $G$, then $\Gamma$ is balanced $\iff$ $\Gamma'$ is balanced. Thus a graph being $d$-balanced is well-defined.
\end{lemma}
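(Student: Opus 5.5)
The plan is to reduce the combinatorial statement to the geometric one at a single generic placement, where orientation-independence is already known. Let $\Gamma$ and $\Gamma'$ be two $d$-orientations on $G$. First, fix a generic placement $p:V\to\mathbb{R}^d$, meaning one whose coordinates are algebraically independent over $\mathbb{Q}$. Such a placement exists, since $\mathbb{R}$ has infinite transcendence degree over $\mathbb{Q}$. It is automatically affinely independent: every $(d+1)\times(d+1)$ minor of $M$ is a nonzero polynomial with integer coefficients in the coordinates $p_{ik}$, so it cannot vanish at $p$.

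Next, I apply Lemma~\ref{lemma:generic-geometric-balancing} to each orientation separately, with the same $p$. This gives
\begin{equation*}
\Gamma \text{ balanced} \iff \Gamma \text{ balanced at } p, \qquad \Gamma' \text{ balanced} \iff \Gamma' \text{ balanced at } p.
\end{equation*}
Lemma~\ref{lemma:Gamma-equivalent-to-Gamma'} then says $\Gamma$ is balanced at $p$ if and only if $\Gamma'$ is balanced at $p$, because both conditions are equivalent to the existence of a nonzero self-stress $wA=0$ at $p$. Chaining these three equivalences yields $\Gamma$ balanced $\iff$ $\Gamma'$ balanced. Hence the property of being $d$-balanced depends only on $G$, and the notion is well-defined.

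The main point needing care is that Lemma~\ref{lemma:generic-geometric-balancing} really holds for each orientation at one common $p$, and that "generic" there means exactly algebraic independence. That is what Lemma~\ref{lemma:generic-evaluation-straightening-new} uses, since it only needs the $p_{ik}$ to be algebraically independent over $\mathbb{Q}$. Two further checks are needed.

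1. The hypotheses of Lemma~\ref{lemma:generic-evaluation-straightening-new} must hold for the bracket polynomials coming from $\Gamma'$ as well as from $\Gamma$. They do: Lemma~\ref{lemma:balancing-multihomogeneous-new} gives multi-homogeneity for every orientation and every choice of sources.
2. The degenerate cases must be consistent. An all-desert orientation is not balanced, either combinatorially or at $p$. If there are more sinks than sources, the orientation is balanced in both senses. Since the desert edges are determined by $G$ and $d$ alone, $\Gamma$ and $\Gamma'$ are either both all-desert or both not.

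With these checks in place, the chain of equivalences completes the proof.
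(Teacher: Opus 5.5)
Your proof is correct and is the paper's argument: fix one generic placement $p$, then chain Lemma~\ref{lemma:generic-geometric-balancing} for $\Gamma$, Lemma~\ref{lemma:Gamma-equivalent-to-Gamma'} at $p$, and Lemma~\ref{lemma:generic-geometric-balancing} for $\Gamma'$. Your extra checks (multi-homogeneity for every orientation, and the all-desert and more-sinks-than-sources cases) are already built into Lemma~\ref{lemma:generic-geometric-balancing}, so they are harmless but not needed.
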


\begin{proof}
    Choose a generic placement $p$.  By Lemmas \ref{lemma:Gamma-equivalent-to-Gamma'} and \ref{lemma:generic-geometric-balancing}, $\Gamma$ is balanced $\iff$ $\Gamma$ is balanced at $p$ $\iff$ $\Gamma'$ is balanced at $p$ $\iff$ $\Gamma'$ is balanced.
\end{proof}

\begin{proof}[Proof of Theorem \ref{thm:main-Maxwell-locally-flexible}]
    Let $\Gamma$ be a $d$-orientation and $p$ a generic placement, which is therefore also affinely independent. We have
\begin{align*}
    G \text{ is } d\text{-balanced} & \iff G \text{ is balanced at } p \\
    & \iff (G,p) \text{ is infinitesimally flexible} \\
    & \iff G \text{ is generically locally flexible.}
\end{align*}
The first equivalence follows from Lemmas \ref{lemma:Gamma-equivalent-to-Gamma'}, \ref{lemma:generic-geometric-balancing}, and \ref{lemma:generic-orientation-independent}. The second equivalence is Corollary \ref{cor:affine-Maxwell-infinitesimally-flexible}, and the third follows from the generic equivalence of infinitesimal and local rigidity.
\end{proof}

\begin{proof}[Proof of Corollary \ref{cor:characterization}]
    Let $m=d|V|-\binom{d+1}{2}$, $p$ a generic placement, and $A$ the corresponding rigidity matrix. Since $|V|\geq d$, the space of infinitesimal rigid motions has dimension $\binom{d+1}{2}$, so $\text{rank} A \leq m$. 
    
    Say $G$ is generically rigid. Then $A$ contains $m$ independent rows, with $F \subset E$ the corresponding edge set, and we set $H=(V,F)$. Then $H$ is a spanning subgraph of $G$ whose rigidity matrix consists precisely of these $m$ independent rows, and hence has rank $m$. Thus $H$ is generically rigid. By Theorem \ref{thm:main-Maxwell-locally-flexible}, $H$ is not
    $d$-balanced.

    Now suppose $G$ contains a spanning $m$-edge subgraph $H$ that is not $d$-balanced.  By Theorem \ref{thm:main-Maxwell-locally-flexible}, $H$ is generically rigid.  Thus, for a generic placement $p$, its rigidity matrix has $m$ independent rows, which are also rows of $A$, making $\text{rank} A = m$ as well. Thus $G$ is generically rigid.
\end{proof}

\section*{Acknowledgments}

The author used LLMs for TikZ diagrams, writing code to count the $8,153,726,976$ signed monomials in the tied-down symbolic determinant for the double banana, and for proofreading the paper.

\begingroup
\footnotesize
\bibliographystyle{plain}
\bibliography{references}
\endgroup

\end{document}